\newtheorem{theorem}{Theorem}
\newtheorem{lemma}[theorem]{Lemma}
\newtheorem{corollary}[theorem]{Corollary}
\newtheorem{proposition}[theorem]{Proposition}
\theoremstyle{remark}
\theoremstyle{definition}
\newtheorem{definition}[theorem]{Definition}
\numberwithin{theorem}{section} 
\numberwithin{equation}{section}
\numberwithin{example}{section}
\title[A limiting case of a higher-dimensional Kronecker-type identity]{A Kronecker-type identity and the representations of a number as a sum of three squares}
\author{Eric T. Mortenson}
\begin{document}

\date{13 February 2017}

\subjclass[2010]{11B65, 11F27}

\keywords{$q$-series, sums of three squares}

\begin{abstract}
By considering a limiting case of a Kronecker-type identity, we obtain an identity found by both Andrews and Crandall.  We then use the Andrews--Crandall identity to give a new proof of a formula of Gauss for the representations of a number as a sum of three squares.  From the Kronecker-type identity, we also deduce Gauss's theorem that every positive integer is representable as a sum of three triangular numbers.
\end{abstract}

\address{Max-Planck-Institut f\"ur Mathematik, Vivatsgasse 7, 53111 Bonn, Germany}
\email{etmortenson@gmail.com}
\maketitle

\section{Introduction}

 Let $q\in\mathbb{C}$ with $0<|q|<1$ and define $\mathbb{C}^{\star}:=\mathbb{C}-\{0\}$.  Recall the notation
\begin{gather*}
(x)_{\infty}=(x;q)_{\infty}:=\prod_{k= 0}^{\infty}(1-xq^{k}), \ \ (x_1,\dots,x_n;q)_{\infty}:=(x_1;q)_{\infty}\cdots (x_n;q)_{\infty}.
\end{gather*}
The following identity was known to Kronecker \cite{Kron1}, \cite[pp. $309$--$318$]{Kron2}, \cite[pp. $70$--$71$]{Weil}. For  $x,y\in \mathbb{C}^*$ where $|q|<|x|<1$ and $y$ neither zero nor an integral power of $q$
\begin{equation}
\sum_{r\in \mathbb{Z}}\frac{x^r}{1-yq^{r}}=\frac{(q)_{\infty}^2(xy,q/xy;q)_{\infty}}{(x,q/x,y,q/y;q)_{\infty}}.\label{equation:kronecker-original}
\end{equation}
However, Kronecker's identity is also a special case of Ramanujan's ${}_{1}\psi_{1}$-summation.  

If we place the additional restriction $|q|<|y|<1$, we have a more symmetric form
\begin{equation}
\Big ( \sum_{r,s\ge 0}-\sum_{r,s<0} \Big)q^{rs}x^ry^s=\frac{(q)_{\infty}^2(xy,q/xy;q)_{\infty}}{(x,q/x,y,q/y;q)_{\infty}},\label{equation:kronecker}
\end{equation}
which has wide variety of applications.  Limiting cases of (\ref{equation:kronecker}) yield well-known results on the representations of a number as a sum of squares \cite{CL, Warn}.  Define $r_s(n)$ to be the number of representations of $n$ as a sum of $s$ squares and define the generating function
\begin{align}
R_s(q)&:=\sum_{n\ge 0 }r_s(n)(-q)^n
=\Big ( \sum_{m\in \mathbb{Z} }(-1)^mq^{m^2}\Big )^s
=\Big ((q)_{\infty}/(-q)_{\infty} \Big )^s,
\end{align}
where the last equality follows from Jacobi's triple product identity.  Multiply both sides of (\ref{equation:kronecker}) by $(1-x)(1-y)/(1-xy)$ and rewrite to have
\begin{equation}
1+\frac{(1-x)(1-y)}{1-xy}\sum_{r,s\ge 1}q^{rs}(x^ry^s-x^{-r}y^{-s})
=\frac{(xyq)_{\infty}(q/xy)_{\infty}(q)_{\infty}^2}{(xq)_{\infty}(q/x)_{\infty}(yq)_{\infty}(q/y)_{\infty}}.\label{equation:kronecker-alt}
\end{equation}
Taking the limit $x=y\rightarrow -1$ of (\ref{equation:kronecker-alt}) yields Jacobi's four-square theorem
\begin{equation}
r_4(n)=8\sum_{\substack{4\nmid d, d \mid n}}d,\label{equation:jacobi}
\end{equation}
which implies Lagrange's theorem that every positive integer is a sum of four squares.  Taking the limit $x$, $y^2\rightarrow -1$ of (\ref{equation:kronecker-alt}) gives a result of Gauss and Lagrange
\begin{equation}
r_2(n)=4(d_1(n)-d_3(n)),\label{equation:fermat}
\end{equation}
where $d_k(n)$ is the number of divisors of $n$ congruent to $k$ modulo $n$.  Identity (\ref{equation:fermat}) implies Fermat's two-square theorem.

Do higher-dimensional generalizations of (\ref{equation:kronecker-original}) yield results on sums of squares?  In recent work, we found that Kronecker's identity (\ref{equation:kronecker-original}) has a double-sum analog:

\begin{theorem}\cite[Theorem $1.1$]{M1}\label{theorem:result} For  $x,y,z\in \mathbb{C}^*$ where $|q|<|y|<1$,  $|q|<|z|<1$, and $x$ neither zero nor an integral power of $q$,
{\allowdisplaybreaks \begin{align}
\Big ( \sum_{s,t \ge 0}&-\sum_{s,t<0} \Big)\frac{q^{st}y^sz^t}{1-xq^{s+t}} \label{equation:thm-result}\\
&=   \frac{(xy,q^2/xy;q^2)_{\infty}}{(x,y,q/x,q/y;q)_{\infty}}
 \frac{(q;q)_{\infty}^2}{(q^2;q^2)_{\infty}}\sum_{k\in \mathbb{Z}} \frac{(-1)^kq^{k^2}(xy)^k}{1+q^{2k}z}+\textup{idem}(z;x,y)\notag \\
&\ \ \ \ \ -2  \frac{(q^2;q^2)_{\infty}^3}{(x,y,z,q/x,q/y,q/z;q)_{\infty}} \frac{(xy,xz,yz,q^2/xy,q^2/xz,q^2/yz;q^2)_{\infty}}{(-x,-y,-z,-q^2/x,-q^2/y,-q^2/z;q^2)_{\infty}},\notag
\end{align}}%
where ``\textup{idem}$(x;x_2,\dots,x_n)$'' means that the previous expression is repeated with $x$ and $x_i$ interchanged for each $i$.
\end{theorem}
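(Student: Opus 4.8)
The plan is to read the right-hand side as three Appell--Lerch sums to base $q^2$ together with one symmetric theta quotient, and to generate this structure by carrying out the double summation one index at a time. Observe first that the inner sum $\sum_k(-1)^kq^{k^2}(xy)^k/(1+q^{2k}z)$ is an Appell--Lerch sum to base $q^2$: since $q^{k^2}=q^k(q^2)^{k(k-1)/2}$, its summand is $(q^2)^{k(k-1)/2}(-q\,xy)^k/(1+q^{2k}z)$, of Appell--Lerch shape with nome $q^2$. The central tension is that the left-hand side singles out $x$, which alone occupies the denominator $1-xq^{s+t}$, whereas the right-hand side is symmetric in $x,y,z$; reconciling this with the symmetric answer is the point of the theorem, and it is what forces the symmetric theta quotient to appear as a correction.

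First I would expand the coupling factor as a geometric series, choosing the direction according to the region of summation (and restricting $x$ to the annulus where the resulting series converge, the general case following by analytic continuation). For $s,t\ge0$ one has $|xq^{s+t}|<1$, so $1/(1-xq^{s+t})=\sum_{j\ge0}x^jq^{j(s+t)}$; for $s,t<0$ one has $|xq^{s+t}|>1$, so $1/(1-xq^{s+t})=-\sum_{j\ge0}x^{-1-j}q^{-(1+j)(s+t)}$. After reversing the orders of summation, which is justified by absolute convergence in the annuli $|q|<|y|,|z|<1$, and reindexing the second contribution by $j\mapsto-1-j$, the whole left-hand side collapses to $\sum_{j\ge0}x^jP_j+\sum_{j<0}x^jN_j$, where $P_j=\sum_{s,t\ge0}q^{st}(yq^j)^s(zq^j)^t$ and $N_j=\sum_{s,t<0}q^{st}(yq^j)^s(zq^j)^t$ are the two halves of the double Kronecker sum in \eqref{equation:kronecker} taken with parameters $yq^j$ and $zq^j$.

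Now I would bring in \eqref{equation:kronecker}, whose right-hand side equals $\Theta_j:=P_j-N_j$. Writing $\vartheta(w):=(w,q/w,q;q)_\infty$ and $J_1:=(q;q)_\infty$, so that $\Theta_j=J_1^3\vartheta(yzq^{2j})/[\vartheta(yq^j)\vartheta(zq^j)]$, the quasi-periodicity $\vartheta(q^jw)=(-1)^jq^{-j(j-1)/2}w^{-j}\vartheta(w)$ collapses $\Theta_j$ to the single Gaussian term $J_1^3\vartheta(yz)\,q^{-j^2}(yz)^{-j}/[\vartheta(y)\vartheta(z)]$. Thus, writing $P_j=\Theta_j+N_j$ in the first sum, the left-hand side becomes $J_1^3\vartheta(yz)/[\vartheta(y)\vartheta(z)]$ times a partial theta series in $x/(yz)$, plus a sum of the $N_j$. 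The Appell--Lerch sum on the right is precisely the convergent avatar of this partial theta series, and the change-of-auxiliary-variable identity for Appell--Lerch sums, which expresses the difference of two such sums as a theta quotient, together with its threefold symmetric version, is what recasts the single Gaussian series symmetrically in $x,y,z$ and splits off the term $-2$ times the symmetric theta quotient.

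The main obstacle will be exactly this last step. Although $\sum_{j\ge0}x^jP_j+\sum_{j<0}x^jN_j$ converges, the separate pieces $\sum_{j\ge0}x^j\Theta_j$ and $\sum_{j\in\Z}x^jN_j$ do not: the hypotheses $|q|<|yq^j|,|zq^j|<1$ needed to evaluate $\Theta_j$ by \eqref{equation:kronecker} hold only in a narrow band of $j$ (at $j=0$ they read $|q|<|y|,|z|<1$), and the Gaussian factor $q^{-j^2}$ renders the naive sum of theta quotients divergent. This divergence is the signature of the genuine Appell--Lerch, rather than pure theta, character of the left-hand side; controlling it by analytic continuation through the Appell--Lerch functional equations is what simultaneously produces the convergent sum $\sum_k(-1)^kq^{k^2}(xy)^k/(1+q^{2k}z)$ and fixes the coefficient $-2$ of the correction. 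A convenient final check is to compare principal parts in $x$: the left-hand side has simple poles only at $x=q^{-n}$, with residues governed by the finite Gaussian sums $\sum_{s}q^{s(n-s)}y^sz^{n-s}$ running over the slice $s+t=n$, and matching these against the right-hand side pins down any additive theta constant left undetermined above.
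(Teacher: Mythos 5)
Your preliminary reductions are correct: expanding $1/(1-xq^{s+t})$ geometrically in the two octants and reindexing does yield the symmetric triple sum (this is exactly the manipulation the paper records in (\ref{equation:LHS-symmetric})), and your collapse of the Kronecker quotient to $\Theta_j=J_1^3\vartheta(yz)\,q^{-j^2}(yz)^{-j}/(\vartheta(y)\vartheta(z))$ via quasi-periodicity is right. But the proof has a genuine gap exactly where you locate it, and the gap is worse than a divergence to be tamed by ``analytic continuation.'' For $j\ge1$ the series $N_j=\sum_{s,t<0}q^{st}(yq^j)^s(zq^j)^t$ does not converge at all: its $t=-1$ slice is a geometric series with ratio of modulus $|q|^{1-j}/|y|>1$ when $|q|<|y|<1$ (symmetrically, $P_j$ diverges for $j\le-1$). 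So the regrouping ``$P_j=\Theta_j+N_j$'' is not a rearrangement of defined quantities; it is meaningless for every $j\ge1$, and Kronecker's identity (\ref{equation:kronecker}) is applicable only when $|q|<|yq^j|,|zq^j|<1$, which under the stated hypotheses is the single value $j=0$. Outside that band the bilateral Kronecker sum does not inherit the quasi-periodicity of its theta-quotient evaluation --- there are additive corrections, which is precisely why the left-hand side is of Appell--Lerch rather than pure theta character. Consequently everything that makes the theorem true is concentrated in the step you defer to ``controlling it by analytic continuation through the Appell--Lerch functional equations'': you give no mechanism for how base-$q$ data produces the base-$q^2$ sums $\sum_k(-1)^kq^{k^2}(xy)^k/(1+q^{2k}z)$, no reason three symmetric idem terms emerge from a route that privileges $(y,z)$ over $x$, and no derivation of the coefficient $-2$ on the symmetric theta quotient.

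Your fallback of matching principal parts in $x$ also does not close the argument as stated: after the simple poles at integral powers of $q$ are matched (with residues the finite Gaussian sums over $s+t=n$, as you say), the difference of the two sides is holomorphic in $x$, but since Appell--Lerch sums are not elliptic in $x$ one cannot invoke quasi-periodicity to force that difference to vanish; a priori it is a nontrivial theta-like function, and excluding it requires exactly the functional-equation bookkeeping you have not performed. Note also that the paper under review does not prove this theorem internally --- it cites \cite{M1}, where the identity is established with the machinery of Hecke-type double sums and Appell--Lerch sums (including the change-of-$z$ identity you allude to), with convergence and functional equations controlled before any such regrouping is attempted. As it stands, your proposal is a plan with a correctly diagnosed but unresolved core, not a proof.
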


A limiting case of Theorem \ref{theorem:result} gives a new proof of an identity found by Crandall in his work on the Madelung constant \cite[$(6.2)$]{Cr}:
\begin{theorem} \label{theorem:ac-identity} For positive integers $n$, we have
\begin{equation}
r_3(n)=6(-1)^{n+1}\sum_{\substack{r,s \ge 1\\rs=n}}(-1)^{r+s} +4(-1)^{n+1}\sum_{\substack{r,s,t \ge 1\\rs+rt+st=n}}(-1)^{r+s+t}.\label{equation:gauss-alt}
\end{equation}
\end{theorem}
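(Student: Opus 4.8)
The plan is to recover (\ref{equation:gauss-alt}) as the specialization $x=y=z=-1$ of the double-sum Kronecker identity (\ref{equation:thm-result}), by evaluating each side of (\ref{equation:thm-result}) in that limit and comparing. First I would record the $q$-series form of the claim. Since $(-1)^{n+1}(-q)^n=-q^n$, multiplying (\ref{equation:gauss-alt}) by $(-q)^n$ and summing over $n\ge1$, together with $R_3(q)=\sum_{n\ge0}r_3(n)(-q)^n$ and $r_3(0)=1$, shows that (\ref{equation:gauss-alt}) is equivalent to the single identity
\begin{equation*}
R_3(q)=1-6\sum_{r,s\ge1}(-1)^{r+s}q^{rs}-4\sum_{r,s,t\ge1}(-1)^{r+s+t}q^{rs+rt+st}.
\end{equation*}
Writing $D:=\sum_{r,s\ge1}(-1)^{r+s}q^{rs}$ and $T:=\sum_{r,s,t\ge1}(-1)^{r+s+t}q^{rs+rt+st}$, the goal becomes $R_3(q)=1-6D-4T$, and (\ref{equation:gauss-alt}) follows coefficientwise.

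I would compute the left-hand side of (\ref{equation:thm-result}) at $x=y=z=-1$ by expanding the geometric factor $1/(1-xq^{s+t})=\sum_{r\ge0}x^rq^{r(s+t)}$ on the half $s,t\ge0$ (and the analogous expansion in $x^{-1}$ on the half $s,t<0$). This turns the exponent $q^{st}$ into the fully symmetric $q^{rs+rt+st}$, with $r$ a new index, so the two halves produce sums of $x^{\pm r}y^{\pm s}z^{\pm t}q^{rs+rt+st}$. Setting $x=y=z=-1$ and sorting by how many of $r,s,t$ vanish, the contributions with no index zero give $2T$ (one copy from each half), the three cases with exactly one index zero each give $D$, hence $3D$, and the degenerate slices with two or three indices zero — in particular $1/(1-x)\to\tfrac12$ — sum, in the Abel sense forced by the limit, to $-\tfrac12$. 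Thus the left-hand side tends to $2T+3D-\tfrac12$.

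The substantive work is the right-hand side. At $x=y=z=-1$ each of $xy,xz,yz\to1$, so the theta-numerators $(xy,xz,yz;q^2)_\infty\to(1;q^2)_\infty=0$; simultaneously the Lambert sums $\sum_k(-1)^kq^{k^2}(xy)^k/(1+q^{2k}z)$ and their two idem partners acquire a simple pole from their $k=0$ term as $z\to-1$, and the final theta-quotient acquires a simple pole from $(-z;q^2)_\infty\to(1;q^2)_\infty=0$ in its denominator. Every term on the right is therefore an indeterminate $0\cdot\infty$ or $0/0$ form. The hard part is to carry out this confluence of zeros and poles cleanly: one expands to first order in $1+x,1+y,1+z$, checks that the residues cancel across the three idem terms and the final quotient (as they must, since the left-hand side is finite), and collects the surviving finite parts. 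Here the prefactor $(q)_\infty^2/(q^2;q^2)_\infty=\sum_{m\in\Z}(-1)^mq^{m^2}=\vartheta$ in the first term is what eventually manufactures the sum-of-squares generating function, and I expect the assembled limit to be exactly $-\tfrac12R_3(q)=-\tfrac12\vartheta^3$.

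Equating the two evaluations then gives $2T+3D-\tfrac12=-\tfrac12R_3(q)$, i.e. $R_3(q)=1-6D-4T$, and reading off the coefficient of $q^n$ recovers (\ref{equation:gauss-alt}). The main obstacle is the right-hand limit: keeping the pole cancellations and the numerous $q^2$-shifted theta factors under control without sign or bookkeeping errors, and confirming that the finite residue really collapses to $\vartheta^3=R_3(q)$.
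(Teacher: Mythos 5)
Your framework is exactly the paper's: impose the extra condition $|q|<|x|<1$ so that the left side of (\ref{equation:thm-result}) symmetrizes to $\bigl(\sum_{r,s,t\ge0}+\sum_{r,s,t<0}\bigr)q^{rs+rt+st}x^ry^sz^t$, specialize at $x=y=z\to-1$, and compare. Your reduction of the theorem to $R_3(q)=1-6D-4T$ is correct, and your left-hand evaluation $2T+3D-\tfrac12$ agrees term-for-term with the paper's computation (no index zero gives $2T$, one zero gives $3D$, the degenerate slices give $3x/(1-x)+1\to-\tfrac12$). The genuine gap is that the right-hand limit --- which is the entire substance of the paper's proof --- is never computed: you end with ``I expect the assembled limit to be exactly $-\tfrac12 R_3(q)$,'' so the decisive step is asserted rather than proved. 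Moreover, the plan you sketch for it (expand to first order in $1+x,1+y,1+z$ independently and verify that ``residues cancel across the three idem terms and the final quotient'') misreads the analytic structure and is harder than necessary. Since you only need an identity in $q$, you may approach the singular point along the diagonal: set $x=y=z$ \emph{first}, then let $x\to-1$. Along this path no cross-term cancellation occurs, because each of the four terms on the right is individually finite: in each Appell--Lerch term the prefactor carries the factor $(x^2;q^2)_\infty=(1-x^2)(x^2q^2;q^2)_\infty$, whose simple zero at $x=-1$ annihilates every $k\ne0$ summand and cancels the simple pole $1/(1+x)$ coming from the $k=0$ summand \emph{of the same term}, leaving
\begin{equation*}
\lim_{x\to-1}\frac{(x^2q^2,q^2/x^2;q^2)_\infty}{(1-x)\,(xq,q/x;q)_\infty^2}\,\frac{(q;q)_\infty^2}{(q^2;q^2)_\infty}
=\frac12\,\frac{(q^2;q^2)_\infty^2}{(-q;q)_\infty^4}\,\frac{(q;q)_\infty^2}{(q^2;q^2)_\infty}=\frac12 R_3(q)
\end{equation*}
for each of the three idem terms, while in the final theta quotient the apparent $0/0$ cancels algebraically via $(x^2;q^2)_\infty=(x;q^2)_\infty(-x;q^2)_\infty$, giving $-2(q^2;q^2)_\infty^3/(-q;q)_\infty^6=-2R_3(q)$; the total is $\tfrac32R_3(q)-2R_3(q)=-\tfrac12R_3(q)$, confirming your prediction.

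So the missing computation is repairable in a few lines once you commit to the diagonal substitution, but as written the argument is incomplete precisely at its crux, and pursuing the three-variable first-order expansion instead would force you to track path-dependent singular parts of individually discontinuous terms --- a delicate bookkeeping exercise that the paper's (and the above) diagonal limit avoids entirely.
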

\noindent Crandall states that (\ref{equation:gauss-alt}) may also be obtained via Andrews's identity \cite[$(5.16)$]{A}:
\begin{equation}
\sum_{n=0}^{\infty}r_3(n)(-q)^n=1+4\sum_{n=1}^{\infty}\frac{(-1)^nq^n}{1+q^n}-
2\sum_{\substack{n=1\\|j|<n}}^{\infty}\frac{q^{n^2-j^2}(1-q^n)(-1)^j}{1+q^n}.
\end{equation}

Define $N_3(n)$ to be the number of integral solutions to $n=x^2+y^2+z^2$, where $\gcd(x,y,z)=1$.  Gauss showed \cite[Art. $291$, $292$]{G}:
\begin{theorem}[Gauss] \label{theorem:gauss} Let $\delta_n=1$ except for $\delta_1=1/2$ and $\delta_3=1/3$.  We have, 
\begin{align*}
N_3(n)&=12\delta_n h(-4n), \ \textup{for }n\equiv 1,\ 2,\ 5, \textup{ or } 6 \pmod 8,\\
N_3(n)&=24\delta_n h(-n), \  \textup{for }n\equiv 3 \pmod 8,
\end{align*}
where $h(d)$ is the order of the class group of discriminant $d$.
\end{theorem}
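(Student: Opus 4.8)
The plan is to recover the primitive count $N_3(n)$ of Theorem~\ref{theorem:gauss} from the total count $r_3(n)$ furnished by the Andrews--Crandall identity \eqref{equation:gauss-alt}. Pulling out the content $d=\gcd(x,y,z)$ of a representation $x^2+y^2+z^2=n$ gives the elementary relation
\[
r_3(n)=\sum_{d^2\mid n}N_3(n/d^2),
\]
so that $N_3(n)=\sum_{d^2\mid n}\mu(d)\,r_3(n/d^2)$ by M\"obius inversion over square divisors. It therefore suffices to introduce the arithmetic function $M(n)$ defined to equal $12\delta_n h(-4n)$ for $n\equiv 1,2,5,6\pmod 8$, to equal $24\delta_n h(-n)$ for $n\equiv 3\pmod 8$, and to vanish for $n\equiv 0,4,7\pmod 8$, and to prove that $M$ satisfies the \emph{same} square-convolution relation $\sum_{d^2\mid n}M(n/d^2)=r_3(n)$; since this convolution is invertible, it will follow that $N_3=M$.

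First I would check that $M$ is supported in the right places: the residues $0,4\pmod 8$ admit no primitive representation, since $x^2+y^2+z^2\equiv 0\pmod 4$ forces $x,y,z$ all even, while $7\pmod 8$ admits no representation at all, so both match $M\equiv 0$. For the surviving classes I would evaluate $\sum_{d^2\mid n}M(n/d^2)$ by inserting Dirichlet's class number formula $h(D)=\tfrac{w}{2|D|}\sum_{k=1}^{|D|-1}\leg{D}{k}\,k$, with $D=-4n/d^2$ or $D=-n/d^2$ and $w\in\{2,4,6\}$, turning the left-hand side into an explicit Kronecker-symbol sum. The right-hand side is the signed divisor-and-triangle sum of \eqref{equation:gauss-alt}. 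Because the quantities involved are essentially multiplicative in $n$, the comparison reduces to prime powers, the prime $2$ being treated separately so as to produce the split into the residue classes $1,2,5,6$ versus $3$ modulo $8$.

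The crux will be showing that the signed ternary sum $\sum_{rs+rt+st=n}(-1)^{r+s+t}$, assembled with the divisor sum $\sum_{rs=n}(-1)^{r+s}$, reproduces the class-number character sum: this is the genuine arithmetic content of Gauss's theorem and cannot be purely formal. I expect the cleanest route is to pass through the Hurwitz class number $H$, interpreting the right-hand side of \eqref{equation:gauss-alt} as $12H(4n)-24H(n)$ and then using the standard decomposition of $H(N)$ as a weighted sum of ordinary class numbers $h(-N/f^2)$ over $f^2\mid N$, which dovetails exactly with the square-convolution for $M$ above. Finally I would dispose of the exceptional constants $\delta_1=\tfrac12$ and $\delta_3=\tfrac13$, which encode the extra units in the orders of discriminant $-4$ and $-3$ (the values $w=4,6$ in Dirichlet's formula), by verifying the cases $n=1$ and $n=3$ by hand.
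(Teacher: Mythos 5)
Your outer scaffolding is correct, and it is worth noting how this statement actually sits in the paper: Theorem \ref{theorem:gauss} is quoted as Gauss's classical result and is never proved there; what the paper proves (Sections \ref{section:idea}--\ref{section:gauss-withsquare}) is Theorem \ref{theorem:gauss-withsquare}, the $r_3$/Hurwitz form. Your relation $r_3(n)=\sum_{d^2\mid n}N_3(n/d^2)$, the M\"obius inversion, the vanishing checks for $n\equiv 0,4,7\pmod 8$, and the matching of the square convolution against $H(N)=\sum_{d^2\mid N}h^{\prime}(-N/d^2)$ (Lemma \ref{lemma:Co-lemma537}) all work: for $n\equiv 1,2,5,6\pmod 8$ every square divisor of $n$ is odd, so $n/d^2$ stays in the same residue class mod $8$, and the double sum telescopes under M\"obius to $N_3(n)=12h^{\prime}(-4n)=12\delta_n h(-4n)$, with $\delta_1$ and $\delta_3$ emerging from $h^{\prime}=h/(\omega/2)$ exactly as you say; similarly one gets $24h^{\prime}(-n)$ for $n\equiv 3\pmod 8$. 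Even your parenthetical uniform reading $r_3(n)=12H(4n)-24H(n)$ is right: the paper itself reaches $r_3(n)=12\{H(4n)-2H(n)\}$ in the $n\equiv 3,7\pmod 8$ cases before applying Lemma \ref{lemma:mult-4}.

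The genuine gap is at what you yourself call the crux, and the one concrete mechanism you offer for it would fail. Establishing that the right-hand side of \eqref{equation:gauss-alt} equals $12H(4n)-24H(n)$ \emph{is} Theorem \ref{theorem:gauss-withsquare} (modulo Lemma \ref{lemma:mult-4}); the sentence ``interpreting the right-hand side of \eqref{equation:gauss-alt} as $12H(4n)-24H(n)$'' assumes the theorem rather than proving it, so your outline reduces Theorem \ref{theorem:gauss} to Theorem \ref{theorem:gauss-withsquare} without proving either. The proposed route --- insert Dirichlet's class number formula and ``reduce to prime powers'' by multiplicativity --- is unavailable: none of $r_3(n)$, $h(-4n)$, $H(4n)$, or $\sum_{rs+rt+st=n}(-1)^{r+s+t}$ is multiplicative in $n$ in any usable sense (for instance $h(-20)=h(-52)=2$ but $h(-260)=8$, i.e.\ $N_3(5)=N_3(13)=24$ while $N_3(65)=96$), and your own correct remark that the identity ``cannot be purely formal'' is in direct tension with this step. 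What the paper substitutes for it is combinatorial, not analytic: the correspondence sending a solution of $rs+rt+st=n$ with $r\ge s\ge t>0$ to the reduced form $(s+t)x^2+2txy+(r+t)y^2$ of discriminant $-4n$, the decomposition \eqref{equation:AC-decompose} with weights as in \eqref{equation:AC-qf-weights}, the parity Lemma \ref{lemma:key-lemma} removing the signs when $n\equiv 1,2\pmod 4$, and, for $n\equiv 3\pmod 8$, a careful bookkeeping of imprimitive forms (even $\gcd$ counted with weight $-1$, odd $\gcd>1$ with weight $+1$) resolved through Lemma \ref{lemma:Co-lemma537} and Lemma \ref{lemma:mult-4}. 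Some such arithmetic input --- this correspondence, or an external proof of Theorem \ref{theorem:gauss-withsquare} such as Hirzebruch--Zagier's --- is needed before your M\"obius-inversion frame yields Theorem \ref{theorem:gauss}.
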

\noindent A consequence of Gauss's theorem is the celebrated local to global principle:

\smallskip
\noindent {\em Legendre/Gauss $(1800)$: $r_3(n)>0$ if and only if $n\ne 4^a(8b+7)$.}  

\smallskip
Using the Andrews--Crandall identity (\ref{equation:gauss-alt}), we give a new proof of the formula:

\begin{theorem}[Gauss] \label{theorem:gauss-withsquare} We have
\begin{align*}
r_3(n)&=12H(4n), \ \textup{for }n\equiv 1,\ 2,\ 5, \textup{ or } 6 \pmod 8,\\
r_3(n)&=24H(n), \  \textup{for }n\equiv 3 \pmod 8,\\
r_3(n)&=0, \  \textup{for }n\equiv 7 \pmod 8,\\
r_3(n)&=r_3(n/4),  \  \textup{for }n\equiv 0 \pmod 4,
\end{align*}
where $H(d)$ is the Hurwitz class number of discriminant $d$.
\end{theorem}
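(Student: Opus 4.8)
The plan is to start from the Andrews--Crandall identity (\ref{equation:gauss-alt}) and reinterpret its right-hand side as a signed, weighted count of positive-definite integral binary quadratic forms of discriminant $-4n$, which I then match against the Hurwitz class number. Write the two pieces of (\ref{equation:gauss-alt}) as $D(n)=\sum_{rs=n,\,r,s\ge 1}(-1)^{r+s}$ and $T(n)=\sum_{rs+rt+st=n,\,r,s,t\ge 1}(-1)^{r+s+t}$, so that $r_3(n)=(-1)^{n+1}\bigl(6D(n)+4T(n)\bigr)$.

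The first step is the change of variables $a=r+t$, $b=2t$, $c=s+t$, under which $rs+rt+st=n$ becomes $(r+t)(s+t)=n+t^2$, i.e.\ $b^2-4ac=-4n$. Thus each triple $(r,s,t)$ with $r,s,t\ge 1$ corresponds to a form $ax^2+bxy+cy^2$ of discriminant $-4n$ with $b>0$ even and $b<2a$, $b<2c$, while the divisor terms $rs=n$ correspond to the forms $[r,0,s]$ with $b=0$. The sign becomes $(-1)^{r+s+t}=(-1)^{a+c+b/2}$, and its value is governed by $n\bmod 8$ through $ac=n+(b/2)^2$. In this language (\ref{equation:gauss-alt}) reads as a sum over forms of discriminant $-4n$ in the region $\{b=0\}\cup\{b>0,\ b<2a,\ b<2c\}$, weighted by $6$ on the diagonal $b=0$ and by $4$ off it.

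Next I would run Gaussian reduction on this region to count, with sign, how many representatives fall into each proper equivalence class. The symmetric loci $r=s$ in $D$ and $r=s=t$ in $T$ are represented fewer times than generic terms, and this is exactly what produces the weights $\tfrac12$ (for multiples of $x^2+y^2$) and $\tfrac13$ (for multiples of $x^2+xy+y^2$) in the definition of $H$. Carrying this out residue by residue, I expect the signs to organize so that for $n\equiv 1,2,5,6\pmod 8$ every class contributes positively and the total is $12H(4n)$; for $n\equiv 3\pmod 8$ a partial cancellation reorganizes the total into $24H(n)$; and for $n\equiv 7\pmod 8$ the signs cancel completely, giving $0$. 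The remaining case $n\equiv 0\pmod 4$ reduces to $r_3(n)=r_3(n/4)$, which I would obtain either by isolating the even parts of the summation variables in $D$ and $T$ (the odd contributions cancelling by sign) or from the elementary fact that a representation of a multiple of $4$ as a sum of three squares has all three entries even.

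The main obstacle is the uniform sign-and-multiplicity bookkeeping of the third step: the region $\{b>0,\ b<2a,\ b<2c\}$ is strictly larger than a fundamental domain for reduced forms, so each class is hit several times, and one must show that the signed over-count collapses to the correct Hurwitz weight simultaneously for all four residues mod $8$. The delicate points are the partial cancellation for $n\equiv 3\pmod 8$, which turns the naive value $12H(4n)$ into $24H(n)$ and implicitly encodes the conductor-$2$ relation between the discriminants $-4n$ and $-n$, and the total cancellation for $n\equiv 7\pmod 8$; making the parity of $a+c+b/2$ track $n\bmod 8$ correctly through $ac=n+(b/2)^2$ is where the real work lies.
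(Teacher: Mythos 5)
Your skeleton is the same as the paper's: start from the Andrews--Crandall identity (\ref{equation:gauss-alt}), map solutions of $rs+rt+st=n$ to forms of discriminant $-4n$ via $(a,b,c)=(s+t,2t,r+t)$, let the symmetric loci produce the Hurwitz weights $\tfrac12$ and $\tfrac13$, and invoke a conductor-$2$ relation between the discriminants $-4n$ and $-n$. But the proposal stops exactly where the content begins, and the three steps you defer are genuine gaps, not routine bookkeeping. First, the over-count collapse you flag as the ``main obstacle'' is dissolved, not fought, in the paper: instead of running Gaussian reduction on the region $\{b>0,\ b<2a,\ b<2c\}$, one orders the triples first. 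Crandall's decomposition (\ref{equation:AC-decompose}) splits the sum into $S_3$-orbits with multiplicities $6,3,1$, and since $(-1)^{r+s+t}$ is symmetric in $(r,s,t)$, the correspondence restricted to $0<t\le s\le r$ is a bijection onto \emph{reduced} forms with $b\ge 0$ even; no reduction theory on non-reduced representatives is ever needed. Second, the positivity you only ``expect'' for $n\equiv 1,2\pmod 4$ is a concrete parity computation (Lemma \ref{lemma:key-lemma}): the summand $(-1)^{n+1}(-1)^{r+s+t}$ is identically $+1$ there, so there is no cancellation at all and the identity becomes the unsigned counts of Propositions \ref{proposition:1mod4id} and \ref{proposition:2mod4id}; one must also observe that type (IV) forms $2r(x^2+xy+y^2)$ cannot occur for these residues, so no weight-$\tfrac13$ terms arise and the total is $12H(4n)$.

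Third, and most seriously, for $n\equiv 3\pmod 8$ the cancellation is not organized by equivalence classes or by $n\bmod 8$ alone, as your sketch suggests, but by the \emph{content} of the form: all-odd triples $(r,s,t)$ produce exactly the imprimitive reduced forms with even $\gcd$, and these carry weight $-1$, while primitive forms and imprimitive forms with odd $\gcd>1$ carry $+1$. Converting the resulting signed count requires the class-number machinery you never supply: Cohen's relation $H(N)=\sum_{d^2\mid N}h^{\prime}(-N/d^2)$ (Lemma \ref{lemma:Co-lemma537}) together with Dirichlet's formula (\ref{equation:ratio-p228}) for non-maximal orders, which yield $r_3(n)=12\{H(4n)-2H(n)\}$ for \emph{both} residues $3$ and $7\pmod 8$, the split coming solely from $\big(\tfrac{\Delta}{2}\big)=\mp1$, i.e.\ $H(4n)=4H(n)$ versus $H(4n)=2H(n)$ (Lemma \ref{lemma:mult-4}). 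In particular your mechanism for $n\equiv 7\pmod 8$ --- ``the signs cancel completely'' --- is wrong: the signed form count does not vanish term by term; the zero is an arithmetic identity between class numbers. Your elementary reduction $r_3(n)=r_3(n/4)$ for $n\equiv 0\pmod 4$ is fine (squares mod $4$ force all of $x,y,z$ even). As it stands, then, the proposal is a correct plan whose hard parts --- the orbit/multiplicity collapse, the content-parity sign law, and the conductor-$2$ class-number relations --- are asserted rather than proven.
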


Many authors have given treatments of Theorem \ref{theorem:gauss-withsquare}. Hecke \cite{He} discussed Theorem \ref{theorem:gauss-withsquare} with reference to Kronecker \cite[p. 253]{Kron0}.  Kronecker proved Theorem \ref{theorem:gauss-withsquare} using identities that he states follow from the theory of elliptic forms \cite[p. 108]{Di}, \cite[pp. 321--322]{Sm}.  Hermite expanded upon Kronecker's identities \cite[Chapter VI, p. 92]{Di}, \cite[pp. 321 ff., pp. 338]{Sm}, and the method of Hermite was translated by Liouville into a purely arithmetical deduction of Kronecker's relations \cite[Chapter XIII]{UH}.  One also has Weil's \cite{Weil2}.

Modularity can also be used to prove identities on sums of squares.  Identity (\ref{equation:jacobi}) is a classic example used to motivate the use of modular forms \cite{DS}.    Motivated by a suggestion of Hecke \cite{He}, Hirzebruch and Zagier \cite{HZ} completed
\begin{equation}
\sum_{n=0}^{\infty}H(n)q^n
\end{equation}
to a function which transforms under $\Gamma_0(4)$ like a modular form of weight $\tfrac{3}{2}$:
\begin{theorem}\label{theorem:HZ-Thm2}\cite[Theorem $2$, p. 92]{Za} For $z\in\mathfrak{H}$, we have
\begin{equation*}
\mathcal{F}(z)=\sum_{n=0}^{\infty}H(n)q^n+y^{-1/2}\sum_{f=-\infty}^{\infty}\beta(4\pi f^2y)q^{-f^2},
\end{equation*}
where $y=\textup{Im}(z)$, $q=e^{2\pi i z}$ and $\beta(x)$ is defined by
\begin{equation*}
\beta(x)=\frac{1}{16\pi} \int_{1}^{\infty}u^{-3/2}e^{-xu}du, \ \ (x\ge0).
\end{equation*}
\end{theorem}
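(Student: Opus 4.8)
The plan is to realize $\mathcal{F}$ as a special value of a real-analytic Eisenstein series of half-integral weight, so that the transformation law under $\Gamma_0(4)$ is built into the construction and the only genuine work is the computation of the Fourier expansion. Concretely, I would form the weight-$\tfrac32$ Eisenstein series attached to the cusp $\infty$,
\[
E(z,s)=\sum_{\gamma=\abcd\in\Gamma_\infty\backslash\Gamma_0(4)} \overline{\nu(\gamma)}\,\frac{\Im(\gamma z)^{s}}{(cz+d)^{3/2}\,|cz+d|^{2s}},
\]
where $\nu(\gamma)$ is Shimura's theta-multiplier, normalized so that $\nu(\gamma)(cz+d)^{1/2}=\theta(\gamma z)/\theta(z)$ for $\theta(z)=\sum_{n\in\Z}q^{n^2}$. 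Since the weight $\tfrac32$ is less than $2$, the series diverges at $s=0$, so I would use Hecke's trick: prove absolute convergence and weight-$\tfrac32$ modularity for $\Re(s)$ large, then establish meromorphic continuation to $s=0$ and set $\mathcal{F}(z)=E(z,0)$. The resulting function transforms like a modular form of weight $\tfrac32$ on $\Gamma_0(4)$ by construction, and the failure of the naive ($s=0$) series to converge is exactly what forces $E(z,0)$ to be non-holomorphic — this non-holomorphy being the source of the correction term in the statement.

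Next I would compute the Fourier expansion of $E(z,s)$ and specialize to $s=0$. Unfolding over $\Gamma_\infty$ and collecting the terms with a fixed lower-left entry $c$ produces, for each frequency, a product of a Whittaker (confluent hypergeometric) function in $y$ with an arithmetic coefficient that is a normalized sum of Gauss sums twisted by the quadratic symbol $\leg{c}{d}\epsilon_d^{-3}$. For positive frequencies the arithmetic factors form a Dirichlet series in $c$ which, after the Gauss sums are evaluated, is a quadratic $L$-value; by the Hurwitz--Kronecker class-number formula this $L$-value equals $H(n)$ up to the expected elementary normalization, producing the holomorphic term $\sum_{n\ge0}H(n)q^{n}$. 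The Whittaker functions degenerate at $s=0$: for the positive frequencies they collapse to $q^{n}$, while for the negative (``$q^{-f^2}$'') frequencies they survive as incomplete-gamma integrals, which assemble into $y^{-1/2}\sum_{f\in\Z}\beta(4\pi f^2y)q^{-f^2}$ with $\beta$ exactly the integral displayed in the statement.

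A conceptually parallel route, useful as a check, is to argue in the language of harmonic Maass forms: verify that $\mathcal{F}$ is harmonic of weight $\tfrac32$ and that its shadow $\xi_{3/2}\mathcal{F}=2iy^{3/2}\,\overline{\partial_{\bar z}\mathcal{F}}$ is a nonzero constant multiple of the weight-$\tfrac12$ theta series $\theta(z)$. Then the non-holomorphic part $y^{-1/2}\sum_f\beta(4\pi f^2y)q^{-f^2}$ is recognized as the standard non-holomorphic Eichler integral of $\theta$, whose transformation combines with that of the holomorphic generating function to yield the weight-$\tfrac32$ transformation law. This reduces the claim to the single identity $\xi_{3/2}\mathcal{F}=c\,\theta$ for an explicit constant $c$, which is a direct computation once the $\beta$-integral is differentiated and the boundary term at $u=1$ is read off.

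The main obstacle is the Fourier-coefficient calculation at the borderline weight $\tfrac32$. Evaluating the twisted Gauss-sum Dirichlet series and identifying it with $H(n)$ demands care with the theta-multiplier and the $\epsilon_d$ factors, and the meromorphic continuation to $s=0$ sits precisely at the edge of absolute convergence, so the analytic continuation and the normalization of the Whittaker functions must be controlled simultaneously in order to pin down the constant $\tfrac1{16\pi}$ appearing in $\beta$. In effect, nearly all of the arithmetic content — the same content that underlies Gauss's three-squares theorem — is concentrated in this one special-value computation.
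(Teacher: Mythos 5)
The paper contains no proof of this statement: Theorem \ref{theorem:HZ-Thm2} is quoted verbatim from Zagier \cite[Theorem 2, p.~92]{Za} (equivalently Hirzebruch--Zagier \cite{HZ}) and is used purely as background, so there is no internal argument to compare yours against. Measured against the original proof in the literature, your proposal is essentially that proof: Zagier constructs exactly the weight-$\tfrac32$ Eisenstein series on $\Gamma_0(4)$ with the theta multiplier, continues it past the edge of convergence (the series converges only for $\Re(s)>\tfrac14$) by Hecke's trick, sets $s=0$, and computes the Fourier expansion, identifying the positive-frequency coefficients with $H(n)$ by evaluating the Gauss-sum Dirichlet series as quadratic $L$-values; the passage from the $L$-value of the fundamental discriminant to the full Hurwitz number $H(n)$ is exactly the content of (\ref{equation:ratio-p228}) and Lemma \ref{lemma:Co-lemma537}, i.e.\ the finite Euler-factor sum over $d^2\mid n$ is produced by the Dirichlet series itself. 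So your first two paragraphs are the standard, correct route.

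Two points in your sketch deserve flagging. First, you never say why the nonholomorphic part is supported only on exponents $-f^2$: for general $n<0$ the arithmetic factor (the Dirichlet series in $c$) vanishes at $s=0$, and only when $-n$ is a perfect square does it survive to multiply the incomplete-gamma Whittaker factor; the $f=0$ term $y^{-1/2}\beta(0)=y^{-1/2}/(8\pi)$ arises separately from the continuous part of the constant term. This vanishing statement is where the continuation and the Whittaker normalization interact, and a complete write-up must include it. Second, your harmonic-Maass ``check'' is fine as a consistency check but is not by itself a proof: the identity $\xi_{3/2}\mathcal{F}=c\,\theta$ determines $\mathcal{F}$ only up to addition of a holomorphic weight-$\tfrac32$ form, and establishing that the displayed completion transforms correctly still requires the classical transformation law of the non-holomorphic Eichler integral of $\theta$ --- which is essentially the same computation in different clothing. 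With those caveats, the plan is correct and coincides with the canonical proof rather than offering an alternative to anything in this paper.
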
 
\noindent As a corollary to Theorem \ref{theorem:HZ-Thm2}, they prove Theorem \ref{theorem:gauss-withsquare} \cite[pp. 92--93]{HZ}.

Of course, there is another well-known result that can be deduced from Theorem \ref{theorem:result}.  Let us define $r_{3\Delta}(n)$ to be the number of representations of $n$ as a sum of three triangular numbers, where a triangular number is a number of the form $k(k-1)/2$.  We write the generating function as
\begin{equation}
\sum_{n=0}^{\infty}r_{3\Delta}(n)q^n:=\Big ( \sum_{n=0}^{\infty}q^{\binom{n+1}{2}}\Big )^3=\Big ( \frac{(q^2;q^2)_{\infty}^2}{(q;q)_{\infty}}\Big )^3, 
\end{equation}
where we have used Jacobi's triple product identity.  Gauss discovered that every positive integer is representable as a sum of three triangular numbers, \cite[Art. $293$]{G}.  In his diary, he made the entry:

\smallskip
\noindent {\em Gauss (1796): EYPHKA! num = $\Delta+\Delta+\Delta$.}

\smallskip
A specialization of Theorem \ref{theorem:result} yields the following identity, which gives as an immediate corollary Gauss's result on triangular numbers. 
\begin{theorem}\label{theorem:EYPHKA} We have
\begin{equation}
\sum_{n=0}^{\infty}r_{3\Delta}(n)q^n=1+3\sum_{r\ge 1}^{\infty}q^r+3\sum_{r,s \ge 1}q^{2rs+r+s} 
+\Big ( \sum_{r,s,t > 0}+\sum_{r,s,t < 0}\Big )q^{2rs+2rt+2st+r+s+t}.\label{equation:id-EYHPKA}
\end{equation}
\end{theorem}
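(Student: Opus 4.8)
The plan is to specialize Theorem \ref{theorem:result} by replacing $q$ with $q^2$ throughout and then setting $x=y=z=q$. First I would check the hypotheses: after $q\mapsto q^2$ the conditions read $|q^2|<|y|,|z|<1$, which hold for $y=z=q$, while $x=q$ is not an integral power of $q^2$ (powers of $q^2$ are even powers of $q$); so the specialization is legitimate and introduces no poles, since $(q;q^2)_\infty\neq 0$.

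Next I would turn the left-hand side of (\ref{equation:thm-result}) into the right-hand side of (\ref{equation:id-EYHPKA}). Under the substitution the summand becomes $q^{2st+s+t}/(1-q^{2s+2t+1})$. On the range $s,t\ge 0$ the exponent $2s+2t+1$ is positive, so I expand $1/(1-q^{2s+2t+1})=\sum_{r\ge 0}q^{r(2s+2t+1)}$ to obtain $\sum_{r,s,t\ge 0}q^{2rs+2rt+2st+r+s+t}$. On the range $s,t<0$ the exponent is negative; writing $1/(1-q^{m})=-\sum_{r\ge 1}q^{-mr}$ for $m<0$ and absorbing the overall minus sign in front of $\sum_{s,t<0}$, the two signs cancel, and after renaming the three summation indices one gets $\sum_{r,s,t<0}q^{2rs+2rt+2st+r+s+t}$. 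Peeling off from the all-nonnegative sum the terms in which at least one index vanishes is routine: the all-zero term gives $1$, the ``one positive index'' terms give $3\sum_{r\ge 1}q^r$, and the ``two positive indices'' terms give $3\sum_{r,s\ge 1}q^{2rs+r+s}$, leaving $\sum_{r,s,t>0}$ from the strictly positive part. This is exactly the right-hand side of (\ref{equation:id-EYHPKA}).

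It then remains to show that the right-hand side of (\ref{equation:thm-result}), under the same substitution, reduces to $\big((q^2;q^2)_\infty^2/(q;q)_\infty\big)^3=\sum_n r_{3\Delta}(n)q^n$. Because $x=y=z=q$, the two ``idem'' copies of the first term coincide with it and contribute three equal pieces; and using $xy=xz=yz=q^2$, $q^4/xy=q^2$, $q^2/x=q$, together with the splitting $(-q;q^4)_\infty(-q^3;q^4)_\infty=(-q;q^2)_\infty$, every Pochhammer symbol collapses to an explicit base-$q^2$ or base-$q^4$ product; in particular the final term becomes $-2(q^4;q^4)_\infty^3(q^2;q^4)_\infty^6/\big((q;q^2)_\infty^6(-q;q^2)_\infty^3\big)$. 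The main obstacle is the Appell--Lerch-type sum $\sum_{k\in\Z}(-1)^kq^{2k^2+2k}/(1+q^{4k+1})$ surviving in the first term: I would evaluate it in closed product form (recognizing it as a specialized Appell--Lerch sum and invoking its theta transformation, or reducing it via (\ref{equation:kronecker-original}) and the Jacobi triple product), and then verify that three times the resulting first term minus twice the explicit product above simplifies to the single quotient $\big((q^2;q^2)_\infty^2/(q;q)_\infty\big)^3$. Essentially all of the work lies in this combined theta/product simplification.

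Finally, the corollary is immediate from (\ref{equation:id-EYHPKA}): the term $3\sum_{r\ge 1}q^r$ already contributes $3$ to the coefficient of every $q^n$ with $n\ge 1$, while all other terms have nonnegative coefficients (for $r,s,t<0$ the exponent $2rs+2rt+2st+r+s+t$ is positive, e.g.\ equal to $3$ at $r=s=t=-1$). Hence $r_{3\Delta}(n)\ge 1$ for all $n\ge 0$, which is Gauss's theorem that every nonnegative integer is a sum of three triangular numbers.
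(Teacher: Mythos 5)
Your proposal is correct and takes essentially the same route as the paper: impose $|q|<|x|<1$ so the left side symmetrizes into the triple sum (the paper's \eqref{equation:LHS-symmetric}), substitute $q\mapsto q^2$, $x=y=z\mapsto q$, peel off the boundary terms, and collapse the theta quotient to $-2(q^2;q^2)_\infty^6/(q;q)_\infty^3$ exactly as you compute. The one step you leave as a sketch---a closed product form for $\sum_{k\in\Z}(-1)^kq^{2k^2+2k}/(1+q^{4k+1})$---is precisely what the paper supplies via the Tannery--Molk partial fraction expansion $\sum_{n\in\Z}\frac{(-1)^nq^{\binom{n+1}{2}}}{1-q^nz}=\frac{(q;q)_\infty^2}{(z,q/z;q)_\infty}$ in base $q^4$ with $z=-q$, i.e.\ the specialized Appell--Lerch evaluation you anticipate, after which each of the three idem terms equals $(q^2;q^2)_\infty^6/(q;q)_\infty^3$ and the right side sums to the generating function of $r_{3\Delta}(n)$.
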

\noindent Gauss's EYPHKA theorem is also a corollary of an identity of Andrews \cite[$(5.17)$]{A}.

In Section \ref{section:notation}, we recall basic facts about quadratic forms and class numbers.  In Sections \ref{section:Gauss} and \ref{section:EYPHKA}, we demonstrate how Theorem \ref{theorem:result} yields (\ref{equation:gauss-alt}) and (\ref{equation:id-EYHPKA}) respectively.  In Section \ref{section:idea}, we sketch the idea behind our proof of Theorem \ref{theorem:gauss-withsquare}.  In Section \ref{section:AC-rework}, we rewrite (\ref{equation:gauss-alt}).  In Section \ref{section:gauss-withsquare}, we prove Theorem \ref{theorem:gauss-withsquare}.   In Section \ref{section:remarks}, we make our concluding remarks.

\section{Preliminaries}\label{section:notation}

We review facts on binary quadratic forms and class numbers \cite{Co, Za}.  Let
\begin{equation}
f(x,y):=ax^2+bxy+cy^2
\end{equation}
be a binary quadratic form.  The discriminant of $f(x,y)$ is defined $D(f):=b^2-4ac$.

\begin{definition} \cite[Definition $5.3.2$]{Co}
 A positive definite quadratic form $(a,b,c)$ of discriminant $D$ is said to be reduced if $|b|\le a\le c$ and if, in addition, when one of the two inequalities is an equality (i.e. either $|b|=a$ or $a=c$), then $b\ge0$.
\end{definition}
A reduced form is said to be primitive if $\gcd(a,b,c)=1$ and imprimitive otherwise.  The number of reduced forms of a given discriminant $D$ is finite in number \cite[p. 59]{Za}.
\begin{definition}\cite[p. 226]{Co}
We define the class number $h(D)$ to be the number of primitive positive definite reduced quadratic forms of discriminant $D$.
\end{definition}

\begin{definition}\label{definition:Cohen-definition536}  \cite[Definition $5.3.6$]{Co} Let $N$ be a non-negative integer.  The Hurwitz class number $H(N)$ is defined as follows.
\begin{itemize}
\item[(1)]  If $N\equiv 1, 2 \pmod 4$ then $H(N)=0$.
\item[(2)]  If $N=0$ then $H(0)=-1/12$.
\item[(3)]  Otherwise (i.e. if $N\equiv 0 \ \textup{or } 3 \pmod 4$ and $N>0$) we define $H(N)$ as the class number of not necessarily primitive (positive definite) quadratic forms of discriminant $-N$, except that forms equivalent to $a(x^2+y^2)$ should be counted with weight $1/2$, and those equivalent to $a(x^2+xy+y^2)$ with weight $1/3$.
\end{itemize}
\end{definition}

Let $\omega(D)$ be the number of roots of unity in the quadratic order of discriminant $D$, then $\omega(-3)=6$, $\omega(-4)=4$, and $\omega(D)=2$ for $D<-4$.  Write $D=D_0f^2$ where $D_0$ is a fundamental discriminant, i.e. $D\ne 1$ and either $D\equiv 1 \pmod 4$ and is sqaurefree, or $D\equiv 0 \pmod 4$, $D/4$ is squarefree and $D/4\equiv 2$ or $3$ $\pmod4$.  We recall a consequence of Dirichlet's \cite[Proposition $5.3.12$]{Co}, \cite[p. $72$]{Za},  see also \cite[p. $228$]{Co}, \cite[pp. $74$, $95$]{Za}:
\begin{equation}
\frac{h(D)}{\omega(D)}=\frac{h(D_0)}{\omega(D_0)}f\prod_{p\mid f}\Big ( 1-\frac{\big (\tfrac{D_0}{p}\big )}{p}\Big),\label{equation:ratio-p228}
\end{equation}
where $\big (\tfrac{\bullet}{p}\big )$ is the Kronecker--Jacobi symbol \cite[Definition $1.4.8$]{Co}.  One could also obtain (\ref{equation:ratio-p228}) via a purely algebraic proof outlined in a series of exercises \cite[Ch. $2.7$, Probs. $6$--$11$]{BoSa}, see also \cite[Art.  $113$, pp. 246--251]{Sm}.

\begin{lemma} \cite[Lemma $5.3.7$]{Co} \label{lemma:Co-lemma537} Define $h^{\prime}(D):=h(D)/(\omega(D)/2)$.  For $N>0$ we have
\begin{equation}
H(N)=\sum_{d^2\mid N}h^{\prime}(-N/d^2),
\end{equation}
and in particular if $-N$ is a fundamental discriminant, we have $H(N)=h(-N)$ except in the special cases $N=3$ ($H(3)=1/3$ and $h(-3)=1$) and $N=4$ ($H(4)=1/2$ and $h(-4)=1$).
\end{lemma}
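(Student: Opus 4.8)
The plan is to unpack the definition of the Hurwitz class number by sorting the positive definite forms of discriminant $-N$ according to their content. First I would observe that any integral form $(a,b,c)$ of discriminant $-N$ factors as $f=d\cdot g$, where $d=\gcd(a,b,c)$ is the content and $g$ is primitive; since $D(f)=d^2D(g)$, the primitive part $g$ has discriminant $-N/d^2$, which in particular forces $d^2\mid N$. Because the content is an invariant of the $\SL_2(\Z)$-equivalence class and $d\,g_1\sim d\,g_2$ if and only if $g_1\sim g_2$, the classes of (not necessarily primitive) forms of discriminant $-N$ break up, according to content, into a disjoint union over those $d$ with $d^2\mid N$ of the classes of primitive forms of discriminant $-N/d^2$; the number of the latter is $h(-N/d^2)$.

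Next I would match the Hurwitz weights to the arithmetic factors $1/(\omega(\cdot)/2)$. The key point is that a form $f=d\,g$ is equivalent to $a(x^2+y^2)=(a,0,a)$ exactly when its primitive part $g$ is equivalent to $(1,0,1)$, the unique reduced primitive form of discriminant $-4$, and is equivalent to $a(x^2+xy+y^2)=(a,a,a)$ exactly when $g\sim(1,1,1)$, the unique reduced primitive form of discriminant $-3$. Hence the Hurwitz weight attached to $f$ depends only on the discriminant $D_1=-N/d^2$ of its primitive part: it is $1/2$ when $D_1=-4$, it is $1/3$ when $D_1=-3$, and it is $1$ otherwise. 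Using $\omega(-4)=4$, $\omega(-3)=6$, and $\omega(D)=2$ for $D<-4$, in every case this weight equals $1/(\omega(D_1)/2)$.

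Summing the weighted class counts over the admissible contents then gives
\[
H(N)=\sum_{d^2\mid N}\frac{h(-N/d^2)}{\omega(-N/d^2)/2}=\sum_{d^2\mid N}h^{\prime}(-N/d^2),
\]
which is the asserted formula. For the ``in particular'' statement I would note that when $-N$ is a fundamental discriminant every $d>1$ with $d^2\mid N$ produces a value $-N/d^2$ that is no longer $\equiv0,1\pmod4$, whence $h(-N/d^2)=0$; thus only the term $d=1$ survives and $H(N)=h^{\prime}(-N)=h(-N)/(\omega(-N)/2)$. Since $\omega(-N)=2$ for $N>4$, this gives $H(N)=h(-N)$ away from the two exceptional discriminants, while $\omega(-4)=4$ and $\omega(-3)=6$ produce precisely $H(4)=h(-4)/2=1/2$ and $H(3)=h(-3)/3=1/3$.

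The step I expect to require the most care is the weight-matching: one must invoke the classical fact that the unit count $\omega(D)$ exceeds $2$ only for $D=-3,-4$, and identify the unique reduced primitive form in each of those two discriminants, in order to see that the somewhat ad hoc Hurwitz conventions of weight $1/2$ and $1/3$ are exactly the automorphism factors $1/(\omega/2)$. The factorization of forms by content and the resulting bijection on equivalence classes is standard bookkeeping, and the reduction to $d=1$ in the fundamental case is a short congruence check.
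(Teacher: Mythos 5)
Your proof is correct: the decomposition of a form as $d\cdot g$ by content, the bijection it induces between classes of content $d$ and primitive classes of discriminant $-N/d^2$, the identification of the Hurwitz weights $1/2$ and $1/3$ with $2/\omega(-4)$ and $2/\omega(-3)$ (which does require $h(-4)=h(-3)=1$, as you note, so that \emph{every} class with primitive part of discriminant $-4$ or $-3$ is of the special shape $a(x^2+y^2)$ or $a(x^2+xy+y^2)$), and the collapse to the $d=1$ term when $-N$ is fundamental are all sound. The paper itself gives no proof of this lemma, citing it as \cite[Lemma $5.3.7$]{Co}, and your argument is exactly the standard content-decomposition proof found in that reference, so the two approaches coincide.
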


\begin{lemma} \label{lemma:mult-4} For positive numbers $n$, where $n\equiv 3 \pmod 4$, we have
{\allowdisplaybreaks \begin{align}
H(4n)=4H(n) \ \textup{for }n\equiv 3\pmod 8,\label{equation:delta-3mod8}\\
H(4n)=2H(n) \ \textup{for }n\equiv 7\pmod 8.\label{equation:delta-7mod8}
\end{align}}%
\end{lemma}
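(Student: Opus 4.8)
The plan is to reduce everything to the class-number relation (\ref{equation:ratio-p228}) after expanding both Hurwitz class numbers through Lemma \ref{lemma:Co-lemma537}. Since $n\equiv 3\pmod 4$ is odd, the squares dividing $4n$ are exactly $m^2$ and $4m^2$ with $m^2\mid n$ (and every such $m$ is automatically odd). Splitting the sum in Lemma \ref{lemma:Co-lemma537} according to these two families gives
$$H(4n)=\sum_{m^2\mid n}h^{\prime}(-4n/m^2)+\sum_{m^2\mid n}h^{\prime}(-n/m^2)=\Big(\sum_{m^2\mid n}h^{\prime}(-4n/m^2)\Big)+H(n),$$
so it suffices to show the remaining sum equals $3H(n)$ when $n\equiv 3\pmod 8$ and $H(n)$ when $n\equiv 7\pmod 8$. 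I would prove the stronger termwise statement that $h^{\prime}(-4N)=c\,h^{\prime}(-N)$ for each $N=n/m^2$, where $c=3$ or $c=1$ respectively.

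Next I would feed a single term into (\ref{equation:ratio-p228}). Writing $-N=D_0g^2$ with $D_0$ the fundamental discriminant and $g$ the conductor, the key observation is that $N$ odd forces both $-N$ and $D_0$ to be odd, hence $g$ is odd and $-4N=D_0(2g)^2$ has conductor $2g$ with $2\nmid g$. Recalling $h^{\prime}(D)=2h(D)/\omega(D)$, the $\omega$-factors are already absorbed into (\ref{equation:ratio-p228}), so forming the ratio of the two instances of that formula makes every odd prime dividing $g$ cancel, leaving only the prime $2$:
$$\frac{h^{\prime}(-4N)}{h^{\prime}(-N)}=\frac{h(-4N)/\omega(-4N)}{h(-N)/\omega(-N)}=2\Big(1-\tfrac{1}{2}\leg{D_0}{2}\Big).$$

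Then I would pin down $\leg{D_0}{2}$ by a congruence chase. Because $m$ is odd, $m^2\equiv 1\pmod 8$, so $N\equiv n\pmod 8$, and since $g^2\equiv 1\pmod 8$ we obtain $D_0\equiv -N\equiv -n\pmod 8$. Thus $D_0\equiv 5\pmod 8$ when $n\equiv 3\pmod 8$ and $D_0\equiv 1\pmod 8$ when $n\equiv 7\pmod 8$; the Kronecker symbol $\leg{D_0}{2}$ equals $-1$ and $+1$ in these two cases, giving the ratios $2(1+\tfrac12)=3$ and $2(1-\tfrac12)=1$. Summing over $m^2\mid n$ and adding back the $H(n)$ term produces $H(4n)=(1+3)H(n)=4H(n)$ and $H(4n)=(1+1)H(n)=2H(n)$, which are (\ref{equation:delta-3mod8}) and (\ref{equation:delta-7mod8}).

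The main obstacle is bookkeeping rather than any deep idea: one must verify carefully that the only prime at which the two conductor products differ is $2$ (which hinges on $g$ being odd), and that the value of the Kronecker symbol at $2$ is governed by $D_0\bmod 8$, which is in turn controlled by $n\bmod 8$ precisely because the squares $m^2$ and $g^2$ are $\equiv 1\pmod 8$. The edge case $N=3$ (so $-N=-3$, $\omega(-3)=6$) deserves a sanity check, but since (\ref{equation:ratio-p228}) already incorporates $\omega$, the same computation handles it automatically.
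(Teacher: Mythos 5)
Your proposal is correct and takes essentially the same route as the paper: both expand $H(4n)$ via Lemma \ref{lemma:Co-lemma537} into the sums over odd and even square divisors of $4n$, then apply (\ref{equation:ratio-p228}) termwise so that the conductors $f$ and $2f$ differ only at the prime $2$, yielding the factor $2\bigl(1-\tfrac{1}{2}\leg{D_0}{2}\bigr)=3$ or $1$ according as $n\equiv 3$ or $7\pmod 8$. The only difference is cosmetic: you justify the value of $\leg{D_0}{2}$ by the explicit congruence chase $D_0\equiv -n\pmod 8$ (via $m^2,g^2\equiv 1\pmod 8$), a step the paper simply asserts.
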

\begin{proof} By Lemma \ref{lemma:Co-lemma537},
{\allowdisplaybreaks \begin{align*}
H(4n)&=\sum_{d^2\mid (4n)}h^{\prime}(-4n/d^2)\\
&=\sum_{2\nmid d, d^2\mid (4n)}h^{\prime}(-4n/d^2)+\sum_{ (2d)^2\mid (4n)}h^{\prime}(-4n/(2d)^2)\\
&=\sum_{d^2\mid n}h^{\prime}(-4n/d^2)+\sum_{ d^2\mid n}h^{\prime}(-n/d^2)\\
&=\sum_{d^2\mid n}h^{\prime}(-\Delta_d)2f_d\prod_{p\mid 2f_d}\Big ( 1-\frac{\big (\tfrac{\Delta_d}{p}\big )}{p}\Big)
 +\sum_{ d^2\mid n}h^{\prime}(-\Delta_d)f_d\prod_{p\mid f_d}\Big ( 1-\frac{\big (\tfrac{\Delta_d}{p}\big )}{p}\Big),
\end{align*}}%
where we used (\ref{equation:ratio-p228}) with $-N/d^2=f_d^2\Delta_d$ and $\Delta_d$ a fundamental discriminant.  For $n\equiv 3 \pmod 8$, we note that  $\big ( \tfrac{\Delta_d}{2} \big )=-1$ and (\ref{equation:delta-3mod8}) follows.  For $n\equiv 7 \pmod 8$ we have $\big ( \tfrac{\Delta_d}{2} \big )=1$ and (\ref{equation:delta-7mod8}) follows.
\end{proof}

\section{Proof of Theorem \ref{theorem:gauss-withsquare}}\label{section:Gauss}

For our purpose we impose the additional restriction $|q|<|x|<1$.  The right-hand side of (\ref{equation:thm-result}) remains the same; however, the left-hand side of (\ref{equation:thm-result}) becomes the symmetric
\begin{equation}
\Big ( \sum_{s,t \ge 0}-\sum_{s,t<0} \Big)\frac{q^{st}y^sz^t}{1-xq^{s+t}}=\Big ( \sum_{r,s,t \ge 0}+\sum_{r,s,t < 0}\Big )q^{st+rs+rt}x^ry^sz^t.\label{equation:LHS-symmetric}
\end{equation}

We set $x=y=z$ in the right-hand side of (\ref{equation:thm-result}) and then take the limit $x\rightarrow -1$.  The theta-quotient in the right-hand side becomes
{\allowdisplaybreaks \begin{align*}
\lim_{x\rightarrow -1}& -2 \frac{(q^2;q^2)_{\infty}^3}{(xq,q/x;q)_{\infty}^3} \frac{(x^2q^2,q^2/x^2;q^2)_{\infty}^3}
 {(-xq^2,-q^2/x;q^2)_{\infty}^3}
  = -2 \frac{(q^2;q^2)_{\infty}^3}{(-q;q)_{\infty}^6} 
=-2R_3(q).
\end{align*}}%
Let us consider an Appell--Lerch function term from the right-hand side.  We have
{\allowdisplaybreaks \begin{align*}
\lim_{x\rightarrow -1}& \frac{(1-x^2)(x^2q^2,q^2/x^2;q^2)_{\infty}}{(1-x)^2(xq,q/x;q)_{\infty}^2}
\frac{(q;q)_{\infty}^2}{(q^2;q^2)_{\infty}} \sum_{k\in \mathbb{Z}}\frac{(-1)^kq^{k^2}(x^2)^k}{1+q^{2k}x}\\
&= \lim_{x\rightarrow -1}\frac{(x^2q^2,q^2/x^2;q^2)_{\infty}}{(1-x)(xq,q/x;q)_{\infty}^2}
\frac{(q;q)_{\infty}^2}{(q^2;q^2)_{\infty}} 
=  \frac{1}{2}\frac{(q^2;q^2)_{\infty}^2}{(-q;q)_{\infty}^4}\frac{(q;q)_{\infty}^2}{(q^2;q^2)_{\infty}}  
=\frac{1}{2}R_3(q),
\end{align*}}%
where we have picked up a non-zero term when $k=0$.    So for $x=y=z\rightarrow -1$, the right-hand side of (\ref{equation:thm-result}) becomes
\begin{equation}
\lim _{x\rightarrow -1} RHS= \frac{3}{2}R_3(q)-2R_3(q)=-\frac{1}{2}R_3(q).\label{equation:new-RHS}
\end{equation}
Let us consider the modified left-hand side of (\ref{equation:thm-result}) which is now (\ref{equation:LHS-symmetric}).  We set $x=y=z$ and then take the limit $x\rightarrow -1$.  We have
{\allowdisplaybreaks \begin{align}
\lim_{x\rightarrow -1 }&\Big ( \sum_{r,s,t \ge 0}+\sum_{r,s,t < 0}\Big )q^{st+rs+rt}x^{r+s+t}\notag \\
&=\lim_{x\rightarrow -1 }\Big [3\sum_{r,s \ge 1}q^{rs}x^{r+s} +3\sum_{r \ge 1}x^r+1
+\Big ( \sum_{r,s,t > 0}+\sum_{r,s,t < 0}\Big )q^{st+rs+rt}x^{r+s+t}\Big ] \notag \\
&=\lim_{x\rightarrow -1 }\Big [3\sum_{r,s \ge 1}q^{rs}x^{r+s} +\frac{3x}{1-x}+1
+\Big ( \sum_{r,s,t > 0}+\sum_{r,s,t < 0}\Big )q^{st+rs+rt}x^{r+s+t}\Big ]\notag \\
&=3\sum_{r,s \ge 1}q^{rs}(-1)^{r+s}-\frac{1}{2}+2\sum_{r,s,t \ge 1}q^{rs+rt+st}(-1)^{r+s+t}.\label{equation:new-LHS}
\end{align}}%
Equating (\ref{equation:new-RHS}) and (\ref{equation:new-LHS}) yields
\begin{equation}
R_3(q)=1+6\sum_{r,s \ge 1}(-q)^{rs}(-1)^{rs+r+s+1} +4\sum_{r,s,t \ge 1}(-q)^{rs+rt+st}(-1)^{rs+rt+st+r+s+t+1}.\label{equation:gauss-gen}
\end{equation}

\section{Proof of Theorem \ref{theorem:EYPHKA}}\label{section:EYPHKA}

As in Section \ref{section:Gauss}, we impose the additional restriction $|q|<|x|<1$.   We set $x=y=z$ in the right-hand side of (\ref{equation:thm-result}) and then make the subsititutions: $q\mapsto q^2$, $x\mapsto q$.   The theta quotient in the right-hand side becomes
\begin{equation*}
 -2  \frac{(q^2;q^2)_{\infty}^3}{(xq,q/x;q)_{\infty}^3}  
 \frac{(x^2q^2,q^2/x^2;q^2)_{\infty}^3}{(-xq^2,-q^2/x;q^2)_{\infty}^3} 
\mapsto -2 \frac{(q^4;q^4)_{\infty}^3}{(q^{3},q;q^2)_{\infty}^3} 
 \frac{(q^6,q^2;q^4)_{\infty}^3}{(-q^{5},-q^{3};q^4)_{\infty}^3}
= -2\frac{(q^2;q^2)_{\infty}^6}{(q;q)_{\infty}^3}.
\end{equation*}
For the Appell--Lerch function terms, we have
\begin{align*}
 \frac{1+x}{1-x}&\frac{(x^2q^2,q^2/x^2;q^2)_{\infty}}{(xq,q/x;q)_{\infty}^2}
\frac{(q;q)_{\infty}^2}{(q^2;q^2)_{\infty}} \sum_{k\in \mathbb{Z}}\frac{(-1)^kq^{k^2}(x^2)^k}{1+q^{2k}x}\\
&\mapsto  \frac{1+q}{1-q}\frac{(q^6,q^2;q^4)_{\infty}}{(q^3,q;q^2)_{\infty}^2}
\frac{(q^2;q^2)_{\infty}^2}{(q^4;q^4)_{\infty}} \sum_{k\in \mathbb{Z}}\frac{(-1)^kq^{4\binom{k+1}{2}}}{1+q^{4k}q}
=\frac{(q^2;q^2)_{\infty}^6}{(q;q)_{\infty}^3},
\end{align*}
where the equality follows from using the classical partial fraction expansion for the reciprocal of Jacobi's theta product \cite[p. 136]{TM}:
\begin{equation*}
\sum_{n\in\mathbb{Z}}\frac{(-1)^nq^{\binom{n+1}{2}}}{1-q^{n}z}=\frac{(q;q)_{\infty}^2}{(z,q/z;q)_{\infty}}, \ \textup{where} \ z\not =q^n, n\in \mathbb{Z}.
\end{equation*}
So the right-hand side of (\ref{equation:thm-result}) becomes 
\begin{equation}
(q^2;q^2)_{\infty}^6/(q;q)_{\infty}^3. \label{equation:new-RHS-EYPHKA}
\end{equation}

Let us consider the modified left-hand side of (\ref{equation:thm-result}) which is now (\ref{equation:LHS-symmetric}).  We set $x=y=z$ and then make the subsititutions: $q\mapsto q^2$, $x\mapsto q$.  We have
 {\allowdisplaybreaks \begin{align}
\Big ( \sum_{r,s,t \ge 0}&+\sum_{r,s,t < 0}\Big )q^{st+rs+rt}x^ry^sz^t\notag \\
&=\Big [3\sum_{r,s \ge 1}q^{rs}x^{r+s} +3\sum_{r \ge 1}x^r+1
+\Big ( \sum_{r,s,t > 0}+\sum_{r,s,t < 0}\Big )q^{st+rs+rt}x^{r+s+t}\Big ] \notag\\
&\ \ \ \ \ \mapsto 1+3\sum_{r\ge 1}^{\infty}q^r+3\sum_{r,s \ge 1}q^{2rs+r+s} 
+\Big ( \sum_{r,s,t > 0}+\sum_{r,s,t < 0}\Big )q^{2rs+2rt+2st+r+s+t}.\label{equation:new-LHS-EYPHKA}
\end{align}}%
Equating (\ref{equation:new-RHS-EYPHKA}) and (\ref{equation:new-LHS-EYPHKA}) yields Theorem \ref{theorem:EYPHKA}.

\section{The Idea behind the proof}\label{section:idea}
Note that we have the decomposition \cite[p. 377]{Cr}:
\begin{equation}
\sum_{\substack{r,s,t\ge1\\rs+rt+st=n}}1=6\sum_{\substack{r>s>t>0\\rs+rt+st=n}}1
+3\sum_{\substack{r,t\ge1\\r\ne t \\r^2+2rt=n}}1+\sum_{\substack{r\ge1\\3r^2=n}}1.\label{equation:AC-decompose}
\end{equation}
Hence we can rewrite (\ref{equation:gauss-alt})
{\allowdisplaybreaks \begin{align}
r_3(n)&=12(-1)^{n+1}\Big [ \frac{1}{2}\sum_{\substack{r,s \ge 1\\rs=n}}(-1)^{r+s} \notag\\
&\ \ \ \ \ +2\sum_{\substack{r>s>t>0\\rs+rt+st=n}}(-1)^{r+s+t}
+\sum_{\substack{r,t\ge1\\r\ne t \\r^2+2rt=n}}(-1)^{t}+\frac{1}{3}\sum_{\substack{r\ge1\\3r^2=n}}(-1)^{r} \Big ].\label{equation:AC-qf-weights}
\end{align}}%

We establish a correspondence between solutions $rs+rt+st=n$ and reduced binary quadratic forms $ax^2+bxy+cy^2$ of discriminant $-4n$.  We see
\begin{align*}
(b/2)(a-(b/2))+(b/2)(c-(b/2))+(a-(b/2))(c-(b/2))=ac-(b/2)^2=n
\end{align*}
is equivalent to
\begin{equation*}
b^2-4ac=-4n.
\end{equation*}
Hence a solution
\begin{equation*}
rs+rt+st=n
\end{equation*}
with $0< t\le s \le r$, corresponds to the reduced quadratic form
\begin{equation*}
(s+t)x^2+(2t)xy+(r+t)z^2
\end{equation*}
with discriminant
\begin{equation*}
(2t)^2-4(s+t)(t+r)=4t^2-4(t^2+rs+rt+st)=-4(rs+rt+st)=-4n.
\end{equation*}

We list the types of primitive and imprimitive reduced forms $ax^2+bxy+cy^2$: 
\begin{itemize}
\item[(I)] forms $(a,b,c)$ where $0=b<a\le c$,
\item[(II)] forms $(a,b,c)$ where $0<|b|<a<c$, i.e. a pair $(a,|b|,c)$ and $(a,-|b|,c)$,
\item [(III)] forms $(a,b,c)$ where $0<b=a<c$ or $0<b<a=c$,
\item[(IV)]  forms $(a,b,c)$ where $0<b=a=c$.
\end{itemize}

The right-hand side of (\ref{equation:AC-qf-weights}) counts precisely the primitive and imprimitive reduced forms of discriminant $-4n$; however, the reduced forms have associated weights.  The first sum counts forms of type (I), the second sum counts forms of type (II), the third sum counts forms of type (III), and the fourth sum counts forms of type (IV).  The proof of Theorem \ref{theorem:gauss-withsquare} boils down to understanding the weights.

\section{Reworking the Andrews--Crandall identity}\label{section:AC-rework}
The goal of this section is to prove the following two propositions.
\begin{proposition}\label{proposition:1mod4id} For $n=4m+1$, we have
\begin{equation}
r_3(n)=6\sum_{\substack{d\ge 1 \\ d \mid n}}1+24\sum_{\substack{r>s>t>0\\rs+rt+st=n}}1
+12\sum_{\substack{r,t\ge1\\r\ne t \\r^2+2rt=n}}1.
\end{equation}
\end{proposition}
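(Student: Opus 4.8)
The plan is to start from the reworked Andrews--Crandall identity (\ref{equation:AC-qf-weights}) and simplify each of its four sums under the hypothesis $n=4m+1$. Since $n$ is odd, $(-1)^{n+1}=1$, so the outer sign and the factor $12$ are fixed; the entire task reduces to determining, for each sum, the value of the internal sign $(-1)^{\cdots}$ on the relevant solution set. I expect three of the four evaluations to be short parity checks, with the three-variable sum being the one requiring real care.

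For the first sum, $rs=n$ with $n$ odd forces both $r$ and $s$ odd, so $(-1)^{r+s}=1$ and the sum collapses to $\sum_{d\mid n}1$, matching the divisor term in the proposition. For the fourth sum I would observe that $3r^2=n\equiv1\pmod 4$ is impossible: if $r$ is even then $3r^2\equiv0\pmod4$, while if $r$ is odd then $r^2\equiv1\pmod8$ gives $3r^2\equiv3\pmod4$; in neither case is $3r^2\equiv1\pmod4$, so this sum is empty and contributes nothing.

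The main obstacle is the second sum, where I must pin down $(-1)^{r+s+t}$ on the set $rs+rt+st=n$. The key tool is the identity $2(rs+rt+st)=(r+s+t)^2-(r^2+s^2+t^2)$. Reducing modulo $2$ shows that for odd $n$ the triple $(r,s,t)$ is either all odd or has exactly one even entry. If all three were odd then $r^2+s^2+t^2\equiv3\pmod8$ and $(r+s+t)^2\equiv1\pmod8$, forcing $2n\equiv-2\pmod8$, i.e. $n\equiv3\pmod4$; this contradicts $n=4m+1$, so the all-odd case never occurs. Hence every solution has exactly one even coordinate, $r+s+t$ is even, and $(-1)^{r+s+t}=1$, so the second sum equals the plain count $\sum_{r>s>t>0,\,rs+rt+st=n}1$.

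For the third sum, $r^2+2rt=r(r+2t)=n$ odd forces $r$ odd, whence $r^2\equiv1\pmod8$; then $2rt=n-r^2\equiv1-1\equiv0\pmod4$ gives $rt$ even, and since $r$ is odd, $t$ must be even, so $(-1)^t=1$ and this sum also reduces to its plain count. Substituting all four evaluations back into (\ref{equation:AC-qf-weights}) with $(-1)^{n+1}=1$ turns the bracket into $\tfrac12\sum_{d\mid n}1+2\sum_{r>s>t>0}1+\sum_{r\ne t,\,r^2+2rt=n}1$, and multiplying by $12$ yields exactly the claimed identity. The only delicate point throughout is the modulo-$8$ computation ruling out all-odd triples; the remaining steps are routine congruence bookkeeping.
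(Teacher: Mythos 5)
Your proof is correct, and at bottom it is the same parity analysis the paper uses, just organized differently. The paper proves a single sign lemma (Lemma \ref{lemma:key-lemma}): for $n=4m+1$ every solution triple of $rs+rt+st=n$ has exactly one even entry (the all-odd case is excluded because expanding $(2r+1)(2s+1)+(2r+1)(2t+1)+(2s+1)(2t+1)=4(rs+rt+st)+4(r+s+t)+3$ shows all-odd forces $n\equiv 3\pmod 4$), so the signed sum in Corollary \ref{corollary:ac-n-odd} equals the unsigned one; it then applies the decomposition (\ref{equation:AC-decompose}), noting the $3r^2=n$ term is empty. You instead work term-by-term in the already-decomposed identity (\ref{equation:AC-qf-weights}) and strip each sign separately: the divisor sum by oddness of $r,s$; the main sum via $2(rs+rt+st)=(r+s+t)^2-(r^2+s^2+t^2)$ reduced modulo $8$; the type-(III) sum via $r(r+2t)=n$ forcing $r$ odd and then $t$ even; and the $3r^2=n$ sum by a direct congruence. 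Your mod-$8$ computation with the square-sum identity is a slicker replacement for the paper's explicit four-case expansion, and your per-term checks are self-contained; what the paper's arrangement buys is that Lemma \ref{lemma:key-lemma} handles $n\equiv 1$ and $n\equiv 2 \pmod 4$ simultaneously, so the same lemma is reused verbatim in the proof of Proposition \ref{proposition:2mod4id}, whereas your route would require redoing the parity bookkeeping there. Both arguments are complete and the final assembly (multiplying the bracket $\tfrac12\sum_{d\mid n}1+2\sum_{r>s>t>0}1+\sum_{r\neq t}1$ by $12$) matches the claimed identity exactly.
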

\begin{proposition}\label{proposition:2mod4id}  For $n=4m+2$, we have
\begin{equation}
r_3(n)=12\sum_{\substack{d\ge 1 \\ d \mid (n/2)}}1+24\sum_{\substack{r>s>t>0\\rs+rt+st=n}}1.
\end{equation}
\end{proposition}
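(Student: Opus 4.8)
The plan is to specialize the reworked Andrews--Crandall identity (\ref{equation:AC-qf-weights}) to $n\equiv 2\pmod 4$ and to evaluate each of its four inner sums by a short parity-and-congruence analysis; the same strategy applied to $n\equiv 1\pmod 4$ simultaneously yields Proposition \ref{proposition:1mod4id}. First I would note that $n=4m+2$ is even, so the outer sign is $(-1)^{n+1}=-1$ and the prefactor in (\ref{equation:AC-qf-weights}) is $-12$.

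For the first (divisor) sum I would observe that, since $n/2$ is odd, every factorization $rs=n$ has exactly one even factor, so $r+s$ is odd and $(-1)^{r+s}=-1$ for every term. Hence $\tfrac12\sum_{rs=n}(-1)^{r+s}=-\tfrac12\sum_{rs=n}1$, and because $n=2\cdot(n/2)$ with $n/2$ odd, the number of ordered factorizations satisfies $\sum_{rs=n}1=2\sum_{d\mid(n/2)}1$, so this sum equals $-\sum_{d\mid(n/2)}1$. After multiplication by $-12$ it contributes $12\sum_{d\mid(n/2)}1$.

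The heart of the argument is the triple sum. Here I would sort the solutions of $rs+rt+st=n$ by the parities of $(r,s,t)$. The all-odd and two-odd types make $rs+rt+st$ odd (hence exclude even $n$), while the all-even type forces $4\mid n$; thus for $n\equiv 2\pmod 4$ only the one-odd, two-even type survives. Although such triples can also produce $n\equiv 0\pmod 4$, every triple actually counted has $r+s+t$ odd, so $(-1)^{r+s+t}=-1$ uniformly and $2\sum_{r>s>t>0}(-1)^{r+s+t}=-2\sum_{r>s>t>0}1$, contributing $24\sum_{r>s>t>0,\,rs+rt+st=n}1$. Finally, the third and fourth sums vanish: $r^2+2rt=r(r+2t)$ is odd for $r$ odd and divisible by $4$ for $r$ even, hence never equals an $n\equiv 2\pmod 4$, while $3r^2$ is $\equiv 0$ or $3\pmod 4$ and so is never $\equiv 2$. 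Collecting the three contributions gives exactly the asserted identity.

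The main obstacle will be the parity bookkeeping in the triple sum: isolating precisely the one-odd/two-even type and verifying that $(-1)^{r+s+t}$ is constant over the terms summed. For the companion Proposition \ref{proposition:1mod4id} this step is a little more delicate, since there one needs the sharper fact that $rs+rt+st\equiv 3\pmod 4$ whenever $r,s,t$ are all odd; this rules out the all-odd type for $n\equiv 1\pmod 4$ and leaves only the two-odd/one-even type, on which $(-1)^{r+s+t}\equiv +1$.
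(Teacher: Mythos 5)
Your proof is correct and takes essentially the same route as the paper: both arguments combine the Andrews--Crandall identity with the decomposition (\ref{equation:AC-decompose}) and the parity classification of solution triples (the paper's Lemma \ref{lemma:key-lemma}), which for $n\equiv 2\pmod 4$ isolates the two-even/one-odd type, on which $(-1)^{n+1}(-1)^{r+s+t}=+1$, and rules out the type (III) and (IV) solutions. The only organizational difference is that you evaluate the divisor sum $\tfrac12\sum_{rs=n}(-1)^{r+s}$ directly via $d(n)=2d(n/2)$, whereas the paper cites Crandall's Corollary \ref{corollary:ac-n-even}, whose $k=1$ case is exactly your computation.
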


We recall that Theorem \ref{theorem:ac-identity} has the following two natural corollaries \cite[p. 376]{Cr}:
\begin{corollary} \label{corollary:ac-n-odd} Suppose $n$ is odd, then 
\begin{equation}
r_3(n)=6\sum_{\substack{d\ge 1 \\ d \mid n}}1+4\sum_{\substack{r,s,t \ge 1\\rs+rt+st=n}}(-1)^{r+s+t}.
\end{equation}
\end{corollary}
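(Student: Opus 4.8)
The plan is to specialize the Andrews--Crandall identity of Theorem \ref{theorem:ac-identity} to odd $n$ and then simplify the first sum by a parity argument. First I would observe that when $n$ is odd the exponent $n+1$ is even, so $(-1)^{n+1}=1$ and both prefactors on the right-hand side of (\ref{equation:gauss-alt}) equal $1$. This immediately reduces the identity to
\begin{equation*}
r_3(n)=6\sum_{\substack{r,s \ge 1\\rs=n}}(-1)^{r+s}+4\sum_{\substack{r,s,t \ge 1\\rs+rt+st=n}}(-1)^{r+s+t},
\end{equation*}
so that only the first sum remains to be rewritten.

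Next I would analyze the first sum under the constraint $rs=n$ with $n$ odd. Since the product $rs$ is odd, each of $r$ and $s$ must itself be odd, whence $r+s$ is even and $(-1)^{r+s}=1$ for every such pair. Consequently the signed divisor sum collapses to a plain count,
\begin{equation*}
\sum_{\substack{r,s \ge 1\\rs=n}}(-1)^{r+s}=\sum_{\substack{r,s \ge 1\\rs=n}}1=\sum_{\substack{d\ge 1 \\ d \mid n}}1,
\end{equation*}
where the final equality uses the bijection sending a factorization $rs=n$ to its first coordinate $r$, which ranges precisely over the divisors $d$ of $n$.

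Substituting this back into the reduced identity yields exactly the claimed formula, with the second sum carried over unchanged from Theorem \ref{theorem:ac-identity} and its prefactor now equal to $4$. There is no serious obstacle in this argument; the only point requiring any care is the elementary parity observation that every divisor of an odd number is odd, which is what forces the signs in the first sum to be uniformly positive and thereby converts the alternating divisor sum into a genuine divisor count.
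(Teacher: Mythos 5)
Your proposal is correct and is exactly the intended derivation: the paper states this as an immediate (``natural'') corollary of Theorem \ref{theorem:ac-identity}, citing Crandall, and the deduction is precisely your parity argument --- $(-1)^{n+1}=1$ for odd $n$, and every factorization $rs=n$ of an odd $n$ has both factors odd, so $(-1)^{r+s}=1$ and the signed sum becomes the divisor count. Nothing is missing; the identification of ordered factorizations $(r,s)$ with divisors $d=r$ of $n$ is the only bookkeeping step, and you handle it correctly.
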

\begin{corollary} \label{corollary:ac-n-even}Suppose $n$ is even and $k$ is maximal such that $2^k\mid n$, then 
\begin{equation}
r_3(n)=6(3-k)\sum_{\substack{d\ge 1 \\ d \mid (n/2^k)}}1-4\sum_{\substack{r,s,t \ge 1\\rs+rt+st=n}}(-1)^{r+s+t}.
\end{equation}
\end{corollary}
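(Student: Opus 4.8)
The plan is to deduce Corollary \ref{corollary:ac-n-even} directly from the Andrews--Crandall identity (\ref{equation:gauss-alt}), in the same spirit as one obtains Corollary \ref{corollary:ac-n-odd} in the odd case. Since $n$ is even we have $(-1)^{n+1}=-1$, so the triple sum in (\ref{equation:gauss-alt}) contributes precisely $-4\sum_{rs+rt+st=n}(-1)^{r+s+t}$, which is already the second term of the claimed formula. It therefore remains only to evaluate the divisor sum $6(-1)^{n+1}\sum_{rs=n}(-1)^{r+s}=-6\sum_{rs=n}(-1)^{r+s}$ and to identify it with $6(3-k)\sum_{d\mid(n/2^k)}1$.

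First I would write $n=2^k m$ with $m=n/2^k$ odd and $k\ge1$, and group the ordered factorizations $rs=n$ by the $2$-adic valuation $a=\ord_2(r)$, so that $\ord_2(s)=k-a$ with $0\le a\le k$. For each fixed value of $a$ the odd parts of $r$ and $s$ run over the complementary divisors of $m$, so there are exactly $\sum_{d\mid m}1$ such factorizations. The key observation is that the parity of $r+s$ depends only on $a$: because $k\ge1$, at most one of $r,s$ can be odd, so $r+s$ is odd exactly when $a\in\{0,k\}$ and even exactly when $1\le a\le k-1$.

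Summing the signs over the $k+1$ valuation classes then gives
\begin{equation*}
\sum_{rs=n}(-1)^{r+s}=(k-1)\sum_{d\mid m}1-2\sum_{d\mid m}1=(k-3)\sum_{d\mid m}1,
\end{equation*}
since $k-1$ classes contribute $+1$ and the two classes $a=0,k$ contribute $-1$. Multiplying by $-6$ and using $m=n/2^k$ turns this into $6(3-k)\sum_{d\mid(n/2^k)}1$, the first term of the corollary; together with the triple-sum term this is exactly the asserted identity. As a check, the boundary cases behave correctly: when $k=1$ there are no even classes and the sum is $-2\sum_{d\mid m}1$, while when $k=3$ the even and odd classes are equinumerous and the divisor coefficient $3-k$ vanishes.

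I do not expect a genuine obstacle here: the entire content lies in the sign evaluation of the first sum, and the only delicate point is to keep the $2$-adic valuations of $r$ and $s$ straight and to treat the endpoints $a=0$ and $a=k$ (equivalently the case $k=1$) correctly. The triple-sum term, and the overall structure of the identity, are inherited verbatim from (\ref{equation:gauss-alt}).
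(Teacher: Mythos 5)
Your proof is correct and is essentially the derivation the paper has in mind: the paper states this corollary without proof, citing Crandall, and the intended argument is exactly yours --- substitute $(-1)^{n+1}=-1$ into (\ref{equation:gauss-alt}) and evaluate $\sum_{rs=n}(-1)^{r+s}$ by grouping the factorizations of $n=2^km$ according to the $2$-adic valuation of $r$, with the two extreme classes contributing $-1$ and the $k-1$ middle classes $+1$, giving $(k-3)\sum_{d\mid m}1$. Your handling of the endpoint cases ($a=0$, $a=k$, and $k=1$) is accurate, so there is nothing to add.
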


We prove the following lemma.
\begin{lemma}\label{lemma:key-lemma}For $n=4m+1$ and $n=4m+2$, we have
\begin{equation}
(-1)^{n+1}\sum_{\substack{r,s,t\ge1\\rs+rt+st=n}}(-1)^{r+s+t}=\sum_{\substack{r,s,t\ge1\\rs+rt+st=n}}1.\label{equation:sign-eq}
\end{equation}
\end{lemma}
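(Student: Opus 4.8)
The plan is to prove the pointwise statement that for every solution $(r,s,t)$ of $rs+rt+st=n$ with $n\equiv 1$ or $2\pmod 4$ one has $(-1)^{r+s+t}=(-1)^{n+1}$. Once this is established, each summand on the left contributes $(-1)^{n+1}(-1)^{r+s+t}=1$, so the signed triple sum collapses to the plain count on the right and (\ref{equation:sign-eq}) follows. Thus it suffices to show $r+s+t\equiv n+1\pmod 2$ for each such solution.

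I would organize the argument around the number $k$ of odd entries among $r,s,t$. First I would record both parities in terms of $k$. A product among $rs$, $rt$, $st$ is odd exactly when both of its factors are odd, so $rs+rt+st$ has the parity of the number of all-odd pairs $\binom{k}{2}$; hence $rs+rt+st$ is odd iff $k\in\{2,3\}$. Likewise $r+s+t$ is even iff $k$ is even. Consequently, for odd $n$ (the case $n\equiv 1\pmod 4$) every solution has $k\in\{2,3\}$, while for even $n$ (the case $n\equiv 2\pmod 4$) every solution has $k\in\{0,1\}$.

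The key step, and the only one needing more than a parity count, is to separate $k=2$ from $k=3$ when $n$ is odd, and $k=0$ from $k=1$ when $n$ is even, by passing from parity to a congruence modulo $4$. If $k=3$, all of $r,s,t$ are odd, so each square is $\equiv 1\pmod 8$ while $r+s+t$ is odd; feeding this into the identity $(r+s+t)^2=r^2+s^2+t^2+2(rs+rt+st)$ gives $2(rs+rt+st)\equiv 1-3\equiv 6\pmod 8$, i.e. $rs+rt+st\equiv 3\pmod 4$. Since $n\equiv 1\pmod 4$ this rules out $k=3$ and forces $k=2$, whence $r+s+t$ is even, matching the parity of $n+1$. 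Symmetrically, if $k=0$ then writing $r=2r'$, $s=2s'$, $t=2t'$ yields $rs+rt+st=4(r's'+r't'+s't')\equiv 0\pmod 4$, incompatible with $n\equiv 2\pmod 4$; so $k=1$, giving $r+s+t$ odd, again the parity of $n+1$.

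I expect the only genuine obstacle to be these two congruence computations isolating the $k=3$ and $k=0$ configurations: the squares identity modulo $8$ is the efficient device for the all-odd case, and both computations are precisely what make the hypothesis $n\equiv 1,2\pmod 4$ (rather than mere parity of $n$) indispensable. Indeed the statement fails for $n\equiv 3\pmod 4$, where $k=3$ does occur, and for $n\equiv 0\pmod 4$, where $k=0$ does occur, which is exactly why the lemma is restricted to $n\equiv 1,2\pmod 4$.
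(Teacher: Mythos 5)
Your proof is correct and follows essentially the same route as the paper: both arguments establish the pointwise claim that every solution $(r,s,t)$ has a fixed parity pattern (one even, two odd when $n\equiv 1\pmod 4$; two even, one odd when $n\equiv 2\pmod 4$) by ruling out the other patterns modulo $4$, whence $(-1)^{r+s+t}=(-1)^{n+1}$ on every summand. The only cosmetic difference is that where the paper expands each parity case of $rs+rt+st$ directly, you handle the all-odd case via $(r+s+t)^2=r^2+s^2+t^2+2(rs+rt+st)$ modulo $8$ --- a valid but equivalent computation.
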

\begin{proof}The result follows from considering the four cases for parity.  If all three of $(r,s,t)$ are odd, then upon rewriting the $3$-tuple according to parity we have
\begin{align*}
(2r+1)(2s+1)+(2r+1)(2t+1)+&(2s+1)(2t+1)\\
&=4(rs+rt+st)+4(r+s+t)+3.
\end{align*}
If we have one even and two odd, we have for example 
\begin{align*}
(2r)(2s+1)+(2r)(2t+1)+&(2s+1)(2t+1)=4(rs+rt+st)+4r+2s+2t+1.
\end{align*}
If we have two even and one odd, we have for example 
\begin{align*}
(2r)(2s)+(2r)(2t+1)+(2s)(2t+1)=4(rs+rt+st)+2r+2s.
\end{align*}
If we have three even, then 
\begin{equation*}
(2r)(2s)+(2r)(2t)+(2s)(2t)=4sr+4rt+4st.
\end{equation*}
If $n=4m+1$, then we can only have one even and two odd. If $n=4m+2$, then we can only have two even and one odd.
\end{proof}

\begin{proof}[Proofs of Propositions \ref{proposition:1mod4id} and \ref{proposition:2mod4id}]

We consider $n=4m+1$.  By parity considerations, we can only have one even and two odd in $(r,s,t)$, so the third sum in (\ref{equation:AC-decompose}) does not occur.  By Corollary \ref{corollary:ac-n-odd} and Lemma \ref{lemma:key-lemma}, we have
\begin{align*}
r_3(n)&=6\sum_{\substack{d\ge 1 \\ d \mid n}}1+4\sum_{\substack{r,s,t \ge 1\\rs+rt+st=n}}1
=6\sum_{\substack{d\ge 1 \\ d \mid n}}1+4\Big ( 6\sum_{\substack{r>s>t>0\\rs+rt+st=n}}1
+3\sum_{\substack{r,t\ge1\\r\ne t \\r^2+2rt=n}}1\Big ). 
\end{align*}

We consider $n=4m+2$.  By parity considerations, we can only have two even and one odd in $(r,s,t)$, so the third sum in (\ref{equation:AC-decompose}) cannot occur.   For the second sum in (\ref{equation:AC-decompose})
\begin{equation*}
(2r)^2+2(2r)(2t+1)=4r^2+8rt+4r\not = 4m+2.
\end{equation*}
So only the first sum in (\ref{equation:AC-decompose}) can occur.  By Corollary \ref{corollary:ac-n-even} and Lemma \ref{lemma:key-lemma}, we have
{\allowdisplaybreaks \begin{align*}
r_3(n)&=12\sum_{\substack{d\ge 1 \\ d \mid (n/2)}}1+4\sum_{\substack{r,s,t \ge 1\\rs+rt+st=n}}1
=12\sum_{\substack{d\ge 1 \\ d \mid (n/2)}}1+4\Big ( 6\sum_{\substack{r>s>t>0\\rs+rt+st=n}}1\Big ). \qedhere
\end{align*}}%
\end{proof}

\section{Proof of Theorem \ref{theorem:gauss-withsquare}}\label{section:gauss-withsquare}

We consider the case $n\equiv 1 \pmod 4$.  We have by Proposition \ref{proposition:1mod4id}:
\begin{equation*}
r_3(n)=12\Big [ \frac{1}{2}\sum_{\substack{d\ge 1 \\ d \mid n}}1+2\sum_{\substack{r>s>t>0\\rs+rt+st=n}}1
+\sum_{\substack{r,t\ge1\\r\ne t \\r^2+2rt=n}}1\Big ].
\end{equation*}

Let us consider the contributions.  The sum
\begin{equation*}
\frac{1}{2}\sum_{\substack{d\ge 1 \\ d \mid n}}1
\end{equation*}
counts the reduced binary quadratic forms of type (I).  They may be primitive or imprimitive.  The factor $1/2$ is used so that for $a\ne c$, the form $ax^2+cy^2$ is only counted once.  If $n$ is a square, say $a^2=n$, then the form $a(x^2+y^2)$ is counted with weight $1/2$, but this is exactly the weight factor from the definition of the Hurwitz class number.

The primitive and imprimitive reduced forms of type (II) are counted by the sum
\begin{equation*}
\sum_{\substack{r>s>t>0\\rs+rt+st=n}}1.
\end{equation*}
To count all the forms $(a,|b|,c)$ and $(a,-|b|,c)$, we introduce a factor of two:  
\begin{equation*}
2\sum_{\substack{r>s>t>0\\rs+rt+st=n}}1.
\end{equation*}

Primitive and imprimitive reduced forms of type (III) are counted by
\begin{equation*}
\sum_{\substack{r,t\ge1\\r\ne t \\r^2+2rt=n}}1.
\end{equation*}

In a solution tuple $(r,s,t)$ where $rs+rt+st=n$ and $n\equiv 1 \pmod 4$, we can only have one even and two odd.  This means that we cannot have a solution $3r^2=n$, so the fourth sum in (\ref{equation:AC-qf-weights}) cannot appear and is therefore not included.  Solutions of the form $3r^2=n$ correspond to the reduced binary quadratic forms of type (IV):
\begin{equation*}
2r(x^2+xy+y^2).
\end{equation*}
Hence there is no $1/3$ contribution to the Hurwitz number.  Thus we have
{\allowdisplaybreaks \begin{align*}
r_3(n)&=12H(4n).
\end{align*}}%

We consider the case $n\equiv 2 \pmod 4$.  We have by Proposition \ref{proposition:2mod4id}:
\begin{equation*}
r_3(n)=12\Big [ 2\Big ( \frac{1}{2}\sum_{\substack{d\ge 1 \\ d \mid (n/2)}}1\Big ) +2\sum_{\substack{r>s>t>0\\rs+rt+st=n}}1\Big ].
\end{equation*}
Solutions $(r,s,t)$ with $rs+rt+st=n$ can only have two even and one odd element.  This precludes solutions of the form $3r^2=n$, so we count no terms with weight $1/3$.  By parity considerations, we have no reduced forms of type (III).  Because $n$ is not a square, we cannot have a reduced form of type $a(x^2+y^2)$.  Arguing as in the previous case yields
\begin{equation*}
r_3(n)=12H(4n).
\end{equation*}

We consider the case $n\equiv 3 \pmod 8$.  From (\ref{equation:AC-qf-weights}) and Corollary \ref{corollary:ac-n-odd}, we have
\begin{equation}
r_3(n)=12\Big [ \frac{1}{2}\sum_{\substack{d\ge 1 \\ d \mid n}}1+2\sum_{\substack{r>s>t>0\\rs+rt+st=n}}(-1)^{r+s+t}
+\sum_{\substack{r,t\ge1\\r\ne t \\r^2+2rt=n}}(-1)^{t}+\frac{1}{3}\sum_{\substack{r\ge1\\3r^2=n}}(-1)^{r}\Big ].\label{equation:n3mod8-master}
\end{equation}
Solution tuples $(r,s,t)$ to $rs+rt+st=n$ have either all three odd or one even and two odd.   Let us consider the contributions.   

Primitive and imprimitive reduced forms of type (I) are counted by
\begin{equation*}
\frac{1}{2}\sum_{\substack{d\ge 1 \\ d \mid n}}1.
\end{equation*}
We note $n$ cannot be a square, so we have do not have any reduced forms $a(x^2+y^2)$.

In the second sum, we have either all three of $(r,s,t)$ are odd or one is even and two are odd.  We consider the reduced form for $r>s>t$:
\begin{equation*}
(s+t)x^2+2txy+(r+t)y^2. 
\end{equation*}
If all three $(r,s,t)$ are odd, we see $2\mid \gcd(s+t,2t,r+t)$ so the form is imprimitive.  Moreover, the form is counted with weight $(-1)$.  If in $(r,s,t)$ one is even and two are odd, we have three cases to check.  For example if $2r+1>2s+1>2t>0$, then the reduced form is
\begin{equation*}
(2t+2s+1)x^2+(4t)xy+(2t+2r+1),
\end{equation*}
which is counted with weight $(+1)$.  If the form is imprimitive, we see 
\begin{equation}
2\nmid \gcd(2t+2s+1,4t, 4t+2t+tr+1)>1.
\end{equation}
In all three cases the reduced forms of type (II) could be primitive or imprimitive. However, imprimitive forms with even $\gcd$ are counted with weight $(-1)$, the imprimitive forms with odd $\gcd>1$ are counted with weight $(+1)$, and the primitive forms are counted with weight $(+1)$.  What to do with this information will be clear at the end.

In the third sum, we have either all three of $(r,s,t)$ are odd or one is even and two are odd.  Suppose we have $r$ is odd and $t$ is even.  Rewriting to reflect parity, we have
{\allowdisplaybreaks \begin{align*}
r^2+2rt\rightarrow (4r+1)^2+2(4r+1)2t&=16r^2+8r+1+16rt+4t\not = 8m+3,\\
r^2+2rt\rightarrow (4r+3)^2+2(4r+3)2t&=16r^2+24r+9+16rt+12t\not = 8m+3,
\end{align*}}%
so we can only have all three odd.  Such solutions correspond to imprimitive reduced forms of type (III).  If $r>t$, we have the corresponding form $(r+t)x^2+2txy+(r+t)y^2.$ Because $r$ and $t$ are both odd, we see $2 \mid \gcd(r+t,2t,r+t)$.  If $t>r$, we have the corresponding form  $(2r)x^2+2rxy+(r+t)y^2.$  Because $r$ and $t$ are both odd, we see $2 \mid \gcd(2r,2r,r+t)$, so there are no primitive forms of type (III). Hence the third sum counts the imprimitive reduced forms of type (III) with weight $(-1)$: 
\begin{equation*}
 -\sum_{\substack{r,t\ge1\\r\ne t \\r^2+2rt=n}}1.
\end{equation*}

In the fourth sum, we can only have all three of $(r,s,t)$ are odd.  Hence the weight is $(-1)$.  Solutions of the form $3r^2=n$ correspond to the imprimitive reduced binary quadratic forms $2r(x^2+xy+y^2)$,  which contribute with weight $1/3$ to the Hurwitz class number.  Hence the fourth sum counts
\begin{equation*}
 -\frac{1}{3}\sum_{\substack{r\ge1\\3r^2=n}}1.
\end{equation*}

Assembling the above pieces, we have
\begin{equation*}
r_3(n)=12\cdot \Big \{ h(-4n) + \Big [ h(-4n)-H(4n)\Big ]+2 \Big [ A(-4n)\Big ]\Big \},
\end{equation*}
where $A(-4n)$ is the number of imprimitive reduced forms of discriminant $-4n$  with odd $\gcd>1$ from
\begin{equation*}
\frac{1}{2}\sum_{\substack{d\ge 1 \\ d \mid n}}1 \ \ \textup{and} \ \ 2\sum_{\substack{r>s>t>0\\rs+rt+st=n}}(-1)^{r+s+t}.
\end{equation*}
Thus we can write
{\allowdisplaybreaks \begin{align*}
r_3(n)&=12\cdot \Big \{ h(-4n) + \Big [ h(-4n)-H(4n)\Big ]+2 \Big [ \sum_{\substack{2\nmid d , d^2 \mid n}}h^{\prime}\Big (-\frac{4n}{d^2}\Big )-h(-4n)\Big ]\Big \}\\
&=12\cdot \Big \{ 2h(-4n) -H(4n) +2 \Big [ \sum_{\substack{ d^2 \mid 4n}}h^{\prime}\Big (-\frac{4n}{d^2}\Big )-\sum_{\substack{ (2d)^2 \mid 4n}}h^{\prime}\Big (-\frac{4n}{(2d)^2}\Big ) -h(-4n)\Big ]\Big \}\\
&=12\cdot \Big \{ 2h(-4n)-H(4n) +2 \Big [H(4n)-H(n) -h(-4n)\Big ]\Big \}\\
&=12\cdot \Big \{  H(4n)-2H(n) \Big \}\\
&=24H(n), 
\end{align*}}%
where we have used (\ref{equation:delta-3mod8}).

We consider the case $n\equiv 7 \pmod 8$.  Although this case is true by simpler means, it is nice to see that our method gives the correct answer.   The argument is almost exactly the same as that for the previous case; however, at the very end when we obtain
\begin{align*}
r_3(n)&=12\cdot \Big \{  H(4n)-2H(n) \Big \},
\end{align*}
we instead use (\ref{equation:delta-7mod8}), which gives
\begin{equation*}
r_3(n)=0.
\end{equation*}

\section{Concluding Remarks}\label{section:remarks}

Authors such as Bell \cite{Be}, Borwein and Choi \cite{BC}, Gage \cite{Ga}, Lass \cite{La}, Mordell \cite{Mo0, Mo1}, and Peters \cite{Pe} have studied solutions to the equation 
\begin{equation}
rs+rt+st=n.\label{equation:rstn}
\end{equation}
It turns out that our proof of Theorem \ref{theorem:gauss-withsquare} is not unlike the methods used by Mordell \cite{Mo0, Mo1}, where he counts non-negative solutions to (\ref{equation:rstn}) by establishing a correspondence with reduced binary quadratic forms with discriminant $\Delta=-4n$.  However, our implementation is different than what one finds in \cite{Mo0, Mo1}.  For example, we have to work with a weight term of $(-1)^{r+s+t}$, and our motivation comes from investigating limiting cases  of higher-dimensional Kronecker-type identites.  In counting non-negative solutions to (\ref{equation:rstn}), Mordell adopts the usual convention of assigning weight $1/3$ to corresponding forms of type $2a(x^2+xy+y^2)$ and weight $1/2$ to forms of type $a(x^2+y^2)$ but also assigns $1/2$ to corresponding forms of type $ax^2+cy^2$ where $a\ne c$, which seems unjustified.  Our approach makes clear the associated weights of $1/2$ and $1/3$, see for example identity (\ref{equation:AC-qf-weights}).   Mordell does not make explicit the connection with Theorem \ref{theorem:gauss-withsquare}; however, Lass \cite{La} does relate solutions of (\ref{equation:rstn}) to sums of three squares but in a different sense.

\section*{Acknowledgements}
We would like to thank Neil Sloane's On-line Encyclopedia of Integer Sequences for directing us to references \cite{BC, Cr, Pe, We},  Michel Matignon for pointing out \cite{La}, Peter Sarnak for pointing out \cite{Weil2}, and Bob Vaughan for pointing out \cite{UH}.   We would also like to thank Hjalmar Rosengren and Ole Warnaar for helpful comments and suggestions.

\end{document}